\numberwithin{equation}{section}
\theoremstyle{plain}
\newtheorem{thm}{Theorem}[section]
\newtheorem{prop}[thm]{Proposition}
\newtheorem{lem}[thm]{Lemma}
\newtheorem{cor}[thm]{Corollary}
\theoremstyle{definition}
\newtheorem{dfn}[thm]{Definition}
\newtheorem{exmp}[thm]{Example}
\newtheorem{rem}[thm]{Remark}
\newtheorem{dfns-rems}[thm]{Definitions and Remarks}
\newtheorem{notas-rems}[thm]{Notations and Remarks}
\newtheorem{exmps-rems}[thm]{Examples and Remarks}
\def\0{{\bf 0}}
\def\NN{\mathbb{N}}
\def\PP{\mathbb{P}}
\DeclareMathOperator{\coker}{coker}
\DeclareMathOperator{\depth}{depth}
\DeclareMathOperator{\reg}{reg}
\DeclareMathOperator{\en}{end}
\DeclareMathOperator{\sym}{Sym}
\DeclareMathOperator{\Tor}{Tor}
\DeclareMathOperator{\inde}{index}
\begin{document}

\author{ kamran lamei and Navid Nemati}

\title{Castelnuovo-Mumford regularity of Koszul cycles and Koszul homologies}

\keywords{Castelnuovo-Mumford regularity, Koszul cycle, Koszul homology}
\subjclass[2000]{ 13D02, 13D03}
\begin{abstract}

We extend to one dimensional quotients the result of A. Conca and S. Murai on the convexity of the regularity of Koszul cycles. By providing a relation between the regularity of Koszul cycles and Koszul homologies we prove a sharp regularity bound for the Koszul homologies of a homogeneous ideal in a polynomial ring under the same conditions.

\end{abstract}

\maketitle


\section{Introduction} \label{sec1}
In order to attain better understanding of a projective variety classically there is a great interest  to determine the equations of a projective variety and also the syzygies of its homogeneous ideal. In this regard, M. Green and R. Lazarsfeld defined the property $N_p$ which, roughly speaking, refers to the simplicity of syzygies of the homogeneous coordinate ring of a smooth projective variety embedded by a very ample line bundle. More precisely, let $R$ be a finitely generated graded $K$-algebra then we say that $R$ satisfies property $N_p$ if we have $\beta_{i,i+j}(R)=0$ for all $1<j$ and for all $1\leq i\leq p$. The Green-Lazarsfeld index of $I$ denoted by $\inde(I)$ is the maximum of such $p$. In the case $X= \PP^n$ with the line bundle $\mathcal{O}_{\PP^n}(d)$, M. Green proved that the coordinate ring of the image of Veronese embedding of degree $d$ of $X$ satisfies the property $N_d$. W. Bruns A. Conca and T. R\"{o}mer \cite{BCR2} improved the lower bound of the Green-Lazarsfeld index of the Veronese subring $S^{(d)}$ to $d+1$, their approach is based on investigation of the homological invariants of the Koszul cycles and Koszul homologies of $d$-th power of the maximal ideal.

By aforementioned motivation A. Conca and S. Murai studied the Castelnuovo-Mumford regularity of the Koszul cycles $Z_{t}(I,S)$ of a homogeneous ideal in a polynomial ring $S$. They proved that regularity of Koszul cycles $Z_{i} (I,S)$ as a function of $i$ is subadditive when $\dim S/I = 0$. We make a generalization showing that if $S$ is a  polynomial ring over a field of characteristic $0$ and $\dim S/I \leq 1$ then 
$$ \reg (Z_{s+t}(I,S)) \leq \reg (Z_{t}(I,S)) + \reg (Z_{s}(I,S)).$$

From the convexity of the regularity of Koszul cycles in dimension $0$,  A. Conca and  S. Murai \cite[Corollary 3.3]{CM} obtained a bound on the regularity of Koszul homologies. But inspired by the remarkable result of M.Chardin and P. Symonds \cite{CS} on the regularity of  cycles and homologies of  a general complex, first we determine the regularity of Koszul cycles by the regularity of the previous Koszul homologies.
Let $S$ be a  polynomial ring, $I$ be a homogeneous ideal of $S$. If $\dim S/I\leq  1$, then for all $0<i<\mu(I)$
$$
\reg(Z_i(I,S))= \max_{0<j<n} \lbrace \reg(H_{i-j}(I,S))+j+1\rbrace.
$$
Where $\mu(I)$ is the minimal number of generators of $I$.

As an application we state sharp bound for the regularity of Koszul homologies in dimension $1$ which is a refinement of the result of A. Conca and S. Murai in dimension $0$. Let $I$ be an ideal of $S$ and  $\dim S/I \leq  1$, then we have the following inequalities between Koszul homologies of $I$ for all $i,j
\geq 1$
$$
\reg(H_{i+j-1}(I,S))\leq \max_{1\leq \alpha,\beta\leq n-1 }\lbrace \reg(H_{i-\alpha}(I,S))+\reg(H_{j-\beta}(I,S))+\alpha+\beta\rbrace.
$$

In the last section, we investigate the behavior of regularity of Koszul homologies of power of the maximal ideal. Thanks to the equivalence between Betti numbers of Veronese embedding and regularity of Koszul homologies, as an application we give a short proof for the theorem of M. Green in \cite{G} on Green-Lazarsfeld index of Veronese embeddings.
\section{Preliminaries} \label{sec2}
Let $S=k[x_1, \dots , x_n]$ be a polynomial ring over a field $k$ and $M$ be a finitely generated graded $S$-module. A minimal free resolution of $M$ is an exact sequence
\begin{center}
$0\rightarrow F_p\rightarrow F_{p-1}\rightarrow \cdots \rightarrow F_0\rightarrow M \rightarrow 0, $
\end{center}
where each $F_i$ is a graded $S$-free module of the form $F_i=\oplus S(-j)^{\beta_{i,j}(M)} $ such that the number of basis elements is minimal and each map is graded. The value ${\beta_{i,j}(M)}$ is called the $i$-th graded Betti numbers of $M$ of degree $j$. Note that the minimal free resolution of $M$ is unique up to isomorphism so the graded Betti numbers are uniquely determined. 
\begin{dfn}
Let $S$ be a polynomial ring and $I$ be a homogeneous ideal of $S$. We say that $I$ satisfies property $N_p$ if we have $\beta_{i,i+j}(I)=0$ for all $j>1$ and for all $i\leq p$. The Green-Lazarsfeld index of $I$ denoted by $\inde(I)$ is the maximum of such $p$.
\end{dfn}
\begin{rem}
We have $\inde(I)=\infty$ if and only if $I$ is generated in degree $2$ and has a linear resolution.
\end{rem}

Let $I= (f_1,\dots ,f_r)$ be a graded $S$-ideal minimally generated in degrees $d_1,\cdots ,d_r$. Define $K(I,S)= \oplus  K_t(I,S)$ as the Koszul complex associated to the $S$-linear map $\phi : F_0 = \oplus S(- d_i) \rightarrow S$ in which $\phi (e_i)= f_i$. Let $K(I,M) = K(I,S) \otimes M$ and denote $Z_t (I,M)$, $B_t (I,M)$ and $H_t (I,M)$ as the cycles, boundaries and homologies of $K(I,M)$ respectively at the homological position $t$. We use $Z_t(I), B_t(I)$ and $H_t(I)$ whenever $M=S$.

\begin{rem}
The Koszul complex does depend on the choice of the generators, but it is unique up to isomorphism if we choose minimal set of generators. Since we only deal with the case that the set of generators is minimal,  we use $K(I)$ instead of $K(f_1,\dots ,f_r)$.
\end{rem}
Let $\mathfrak{m}= (x_1\dots , x_n)$ be the graded maximal ideal of $S$, for a finitely generated module $M$ define the \v{C}ech complex as follows:
$$
\mathcal{C}^{\bullet}_{\mathfrak{m}}(M): 0 \rightarrow M \rightarrow \oplus_{1\leq i \leq n} M_{x_i} \rightarrow \oplus_{1\leq i,j\leq n} M_{x_ix_j} \rightarrow \cdots \rightarrow M_{x_1\dots x_n}\rightarrow 0 
$$
The local cohomology modules of an S-module $M$ are the homologies of the \v{C}ech complex. It is a well known fact that each local cohomology module is  artinian so we can speak about the last nonzero degree of each of them.  We define $$a_i^{\mathfrak{m}}(M):= \en (H^i_{\mathfrak{m}}(M)) =\max\lbrace j\,\vert \,H^i_{\mathfrak{m}}(M)_j\neq 0\rbrace,$$ then the Castelnuovo-Mumford regularity of module $M$ is defined as follows:
\begin{dfn} 
Let $S=k[x_1,\dots ,x_n]$ be a polynomial ring,  $\mathfrak{m}=(x_1,\dots , x_n)$ be a unique graded maximal ideal of $S$ and $M$ be a finitely generated $S$-module, then
\begin{equation*}
\reg(M)= \max \lbrace a_i^{\mathfrak{m}}(M)+i\rbrace 
\end{equation*}
\end{dfn}
\subsection*{Veronese embedding}
Let $S=k[x_1,\dots , x_n]$ be a polynomial ring, for each $c$ consider $S$-subalgebra $V_S(c)= \oplus_{i\geq 0} S_{ic}$ where $S_d$ is a $k$-vector space spanned by all monomials of degree $c$ in $S$. We call $V_S(c)$ the $c$-th Veronese embedding of $S$. There is another interpretation of Veronese embedding as a coordinate ring of a variety, which is isomorphic to $k[t_1,\dots ,t_N]/J$ where $N= \binom{n+c-1}{n-1}$ and $J$ is a kernel of the map $\phi: k[t_1,\dots , t_N] \rightarrow S$ by sending $t_i$'s to all monomials of degree $c$ in $S$. The variety correspond to $J$ is called Veronese Variety. We denote $\beta_{i,j}(k[t_1,\dots ,t_N]/J)$ by $\beta_{i,j}(V_S(c))$ 
\section{Regularity of Koszul cycles} \label{sec2}
In this section we will present a generalization of a result about convexity of regularity of Koszul cycles of A. Conca and S. Murai.

 The following interesting property of Koszul cycles was first investigated in work of W. Bruns, A. Conca and T. R\"{o}mer in \cite{BCR}
\begin{lem}\cite[Lemma 2.4]{BCR}  \label{summand}
Let $S$ be a polynomial ring, $I$ be a homogeneous ideal of $S$ and $M$ be a finitely generated graded $S$-module. Suppose that the element $\left(\begin{array}{ccc} s+t \\ s\end{array}\right)$ is invertible in $S$. Then $Z_{s+t} (I,M)$ is a direct summand of $Z_{s} (I,Z_{t}(I,M))$
\end{lem}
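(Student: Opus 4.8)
The plan is to realise both modules inside the triple tensor product $\bigwedge^{s}F_{0}\otimes\bigwedge^{t}F_{0}\otimes M$ and to link them through the comultiplication and the multiplication of the exterior algebra $\bigwedge F_{0}$. Write $\partial$ for the Koszul differential, that is, contraction by $\phi$. On $C_{p,q}=\bigwedge^{p}F_{0}\otimes\bigwedge^{q}F_{0}\otimes M$ consider the two maps $d^{h}=\partial\otimes 1\otimes 1$ and $d^{v}=1\otimes\partial\otimes 1$. Since $\bigwedge^{s}F_{0}$ is free, an element $w\in C_{s,t}$ lies in $\bigwedge^{s}F_{0}\otimes Z_{t}(I,M)$ precisely when $d^{v}w=0$, and it then lies in $Z_{s}(I,Z_{t}(I,M))$ precisely when in addition $d^{h}w=0$. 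Thus $Z_{s}(I,Z_{t}(I,M))=\ker d^{h}\cap\ker d^{v}$ in bidegree $(s,t)$, the module of bi-cycles, and this is the object I want to compare with $Z_{s+t}(I,M)$.

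Let $\Delta_{s,t}\colon\bigwedge^{s+t}F_{0}\to\bigwedge^{s}F_{0}\otimes\bigwedge^{t}F_{0}$ be the $(s,t)$-component of the exterior comultiplication and $\nabla_{s,t}\colon\bigwedge^{s}F_{0}\otimes\bigwedge^{t}F_{0}\to\bigwedge^{s+t}F_{0}$ the wedge product; tensoring with $M$ yields $S$-linear, degree-preserving maps $\alpha=\Delta_{s,t}\otimes 1$ and $\beta=\nabla_{s,t}\otimes 1$. The easy half is that $\beta$ sends bi-cycles to cycles: because $\partial$ is an antiderivation of $\bigwedge F_{0}$, for $w\in C_{s,t}$ one has $\partial\,\nabla_{s,t}(w)=\nabla_{s-1,t}(d^{h}w)\pm\nabla_{s,t-1}(d^{v}w)$, which vanishes once $d^{h}w=d^{v}w=0$; hence $\beta$ restricts to a map $Z_{s}(I,Z_{t}(I,M))\to Z_{s+t}(I,M)$.

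The crux is the opposite direction, namely that $\alpha$ carries $Z_{s+t}(I,M)$ into the bi-cycles. This rests on the commutation relations $d^{h}\circ\Delta_{s,t}=\Delta_{s-1,t}\circ\partial$ and $d^{v}\circ\Delta_{s,t}=\pm\,\Delta_{s,t-1}\circ\partial$, which reflect that contraction by $\phi$ is dual to exterior multiplication by $\phi$ and so commutes with each tensor factor of the comultiplication separately. The hard part will be to establish these identities, and in particular to pin down their signs, by evaluating both sides on a basis of $\bigwedge^{s+t}F_{0}$ and matching the shuffle signs produced by $\Delta_{s,t}$ against those produced by $\partial$. Granting them, for $z\in Z_{s+t}(I,M)$ we get $d^{h}\alpha(z)=(\Delta_{s-1,t}\otimes 1)(\partial\otimes 1)(z)=0$ and $d^{v}\alpha(z)=0$, so $\alpha(z)\in Z_{s}(I,Z_{t}(I,M))$. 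Finally I would compute $\beta\circ\alpha=(\nabla_{s,t}\Delta_{s,t})\otimes 1$: on a basis vector $e_{A}$ with $|A|=s+t$ the comultiplication produces one signed term for each splitting $A=B\sqcup C$ with $|B|=s$, and wedging back restores $e_{A}$ with the square of the shuffle sign, so $\nabla_{s,t}\Delta_{s,t}=\binom{s+t}{s}\cdot\mathrm{id}$. As $\binom{s+t}{s}$ is invertible in $S$, the map $\binom{s+t}{s}^{-1}\beta$ is a retraction of $\alpha$, which exhibits $Z_{s+t}(I,M)$ as a direct summand of $Z_{s}(I,Z_{t}(I,M))$.
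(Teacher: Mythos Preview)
The paper does not give a proof of this lemma: it is quoted verbatim from \cite[Lemma~2.4]{BCR} and used as a black box, so there is no in-paper argument to compare against. Your proposal is, in outline, exactly the argument of Bruns--Conca--R\"omer: identify $Z_{s}(I,Z_{t}(I,M))$ with the module of bi-cycles in $\bigwedge^{s}F_{0}\otimes\bigwedge^{t}F_{0}\otimes M$, use the exterior diagonal $\Delta_{s,t}$ and the wedge product $\nabla_{s,t}$ to produce maps between this and $Z_{s+t}(I,M)$, and invoke the Hopf-algebra identity $\nabla_{s,t}\Delta_{s,t}=\binom{s+t}{s}\cdot\mathrm{id}$ to split $\alpha$. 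The two facts you single out as needing verification, that $\partial$ is a derivation for $\nabla$ and a coderivation for $\Delta$, are indeed the substance of the computation, and your description of how to check them on a basis is the standard route; there is no hidden obstruction there. So your plan is correct and coincides with the original source the paper is citing.
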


The following lemma provides us to compare regularities of diferrent terms of exact sequences and basically it plays the main role in the generalization of the result of A. Conca and S. Murai\cite{CM} on the convexity of regularity of  Koszul cycles. 
\begin{lem} \label{4term}
Let  $L$ : $0 \longrightarrow{} L_{4}  \overset{{d_4}}\longrightarrow   L_{3}\overset{{d_3}} \longrightarrow L_{2} \overset{{d_2}}\longrightarrow L_{1} \longrightarrow 0$ be an exact sequence of finitely generated graded $S$-modules such that $L_1$ and $L_4$ are supported in dimension less than or equal $1$, and $\depth L_2\geq 2$ then
$$\reg (L_{3}) = \max \{\reg (L_{4}) , \reg (L_{2} ), \reg (L_{1}) - 1 \},$$
in particular $\reg (L_{2}) \leq \reg (L_{3})$.
\end{lem}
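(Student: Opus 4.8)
The plan is to establish the asserted equality
\[
\reg(L_3)=\max\{\reg(L_4),\,\reg(L_2),\,\reg(L_1)-1\}
\]
by breaking the four-term sequence at the module $C:=\operatorname{im}(d_3)=\ker(d_2)$ into the two short exact sequences
\[
(\ast)\colon\ 0\to L_4\to L_3\to C\to 0,\qquad (\ast\ast)\colon\ 0\to C\to L_2\to L_1\to 0,
\]
and to compute regularities through the invariants $a^{\mathfrak{m}}_i(M)=\en(H^i_{\mathfrak{m}}(M))$, using $\reg(M)=\max_i\{a^{\mathfrak{m}}_i(M)+i\}$. The hypotheses are exactly what is needed to pin down local cohomology: $\dim L_1\le 1$ and $\dim L_4\le 1$ give $H^i_{\mathfrak{m}}(L_1)=H^i_{\mathfrak{m}}(L_4)=0$ for all $i\ge 2$, while $\depth L_2\ge 2$ gives $H^0_{\mathfrak{m}}(L_2)=H^1_{\mathfrak{m}}(L_2)=0$. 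The entire argument is then a matter of feeding these vanishings into the two associated long exact sequences in local cohomology and recording the cohomological degree in which each module reappears.

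First I would treat $(\ast\ast)$. Its long exact sequence, combined with $H^0_{\mathfrak{m}}(L_2)=H^1_{\mathfrak{m}}(L_2)=0$ and with $H^i_{\mathfrak{m}}(L_1)=0$ for $i\ge 2$, expresses every $H^i_{\mathfrak{m}}(C)$ through the local cohomologies of $L_1$ and $L_2$: the bottom of $H_{\mathfrak{m}}^{\bullet}(C)$ is governed by the low local cohomology of $L_1$, while from a certain index on $H^i_{\mathfrak{m}}(C)$ agrees with $H^i_{\mathfrak{m}}(L_2)$. Because a short exact sequence is exact in each internal degree, the ends of these modules combine as genuine maxima rather than mere bounds, so that reading off $\max_i\{a^{\mathfrak{m}}_i(C)+i\}$ presents $\reg(C)$ as the maximum of $\reg(L_2)$ and a contribution of $L_1$, the latter entering at the shift dictated by the cohomological degree in which the local cohomology of $L_1$ resurfaces in that of $C$.

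Next I would substitute this into $(\ast)$. Since $\dim L_4\le 1$ kills $H^i_{\mathfrak{m}}(L_4)$ for $i\ge 2$, the long exact sequence of $(\ast)$ identifies $H^i_{\mathfrak{m}}(L_3)$ with $H^i_{\mathfrak{m}}(C)$ in the high range, splices the first local cohomologies of $L_4$ and of $C$ into $H^1_{\mathfrak{m}}(L_3)$, and matches $H^0_{\mathfrak{m}}(L_3)$ with $H^0_{\mathfrak{m}}(L_4)$. Assembling $\max_i\{a^{\mathfrak{m}}_i(L_3)+i\}$ then exhibits the three competing quantities $\reg(L_4)$, $\reg(L_2)$, and the shifted contribution of $L_1$ that appears in the displayed formula; in particular $\reg(L_2)$ is literally one of the terms of this maximum, which yields the asserted inequality $\reg(L_2)\le\reg(L_3)$ at once, and the degree-wise exactness promotes the inequalities into the claimed equality.

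The main obstacle is precisely the shift bookkeeping: turning the one-sided regularity estimates attached to a short exact sequence into the exact equality with the correct shift on the $L_1$ term. Here the support and depth hypotheses do the decisive work, since they decouple the cohomological degrees — $L_1$ and $L_4$ can only contribute in degrees $0$ and $1$, whereas $L_2$ contributes only in degrees $\ge 2$ — so that no cancellation occurs between the three sources and each $a^{\mathfrak{m}}_i(L_3)+i$ is pinned down rather than merely bounded. Tracking carefully which cohomological degree each contribution lands in, and hence the exact shift attached to the $\reg(L_1)$ term as recorded in the statement, is the delicate point; once the degrees are separated in this way, the equality falls out of the two long exact sequences.
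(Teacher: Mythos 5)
Your strategy is exactly the paper's own: split the four-term sequence at $C=\ker(d_2)\cong\coker(d_4)$ into the two short exact sequences, run both long exact sequences of local cohomology, use $\dim L_1,\dim L_4\le 1$ and $\depth L_2\ge 2$ to decouple the cohomological degrees (so $L_4$ and $L_1$ contribute only in degrees $0,1$, and $L_2$ only in degrees $\ge 2$), and combine ends of modules in short exact sequences as genuine maxima via degree-wise exactness. Structurally this is sound and is precisely how the paper argues.

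However, the one step you defer --- ``tracking carefully\ldots the exact shift attached to the $\reg(L_1)$ term as recorded in the statement'' --- is exactly where the printed statement goes wrong, and no amount of careful bookkeeping can make it come out as stated. Carrying out your own plan: the second sequence gives $H^0_{\mathfrak{m}}(C)=0$, $H^1_{\mathfrak{m}}(C)\cong H^0_{\mathfrak{m}}(L_1)$, an exact sequence $0\to H^1_{\mathfrak{m}}(L_1)\to H^2_{\mathfrak{m}}(C)\to H^2_{\mathfrak{m}}(L_2)\to 0$, and $H^i_{\mathfrak{m}}(C)\cong H^i_{\mathfrak{m}}(L_2)$ for $i\ge 3$; the first sequence then gives $H^0_{\mathfrak{m}}(L_3)\cong H^0_{\mathfrak{m}}(L_4)$, $0\to H^1_{\mathfrak{m}}(L_4)\to H^1_{\mathfrak{m}}(L_3)\to H^0_{\mathfrak{m}}(L_1)\to 0$, and $H^i_{\mathfrak{m}}(L_3)\cong H^i_{\mathfrak{m}}(C)$ for $i\ge 2$. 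Hence the $L_1$-contributions to $\reg(L_3)=\max_i\{a^i_{\mathfrak{m}}(L_3)+i\}$ are $a^0_{\mathfrak{m}}(L_1)+1$ and $a^1_{\mathfrak{m}}(L_1)+2$, whose maximum is $\reg(L_1)+1$, not $\reg(L_1)-1$. So what this argument actually proves is
$$\reg(L_3)=\max\{\reg(L_4),\ \reg(L_2),\ \reg(L_1)+1\},$$
and the lemma as printed contains a sign error: indeed the paper's own table of the $a^i_{\mathfrak{m}}(L_3)$ at the end of its proof yields the $+1$ version, and the $-1$ version is false outright --- take $0\to 0\to \mathfrak{m}\to S\to k\to 0$ over $S=k[x,y]$, where $\reg(\mathfrak{m})=1$ while $\max\{\reg(S),\,\reg(k)-1\}=0$. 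Your closing observation that $\reg(L_2)$ is literally one of the terms of the maximum does correctly establish the ``in particular'' inequality $\reg(L_2)\le\reg(L_3)$, which is the only part used later in the paper; but a proof that promises to land on the stated $-1$ shift cannot be completed as announced, so you must correct the shift to $+1$ rather than defer to the statement.
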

\begin{proof}
First we decompose the complex $L$ into the following short exact sequences
$$
0 \longrightarrow{} L_{4}  \overset{{d_4}}\longrightarrow   L_{3}\overset{{can.}} \longrightarrow \coker ({d_4}) \longrightarrow 0 ,$$

$$ 0\longrightarrow{}  \coker ({d_4})  \overset{{\bar{d_3}}}\longrightarrow   L_{2}\overset{{d_2}} \longrightarrow L_{1} \overset{{}}\longrightarrow 0.$$
Given the above short exact sequences, one can obtain the following induced long exact sequences on local cohomology: 
$$
\hspace*{-1cm}(\text{I})\hspace*{1.5cm}\cdots\longrightarrow H^{i}
_{\mathfrak{m}}(L_{4})\rightarrow H^{i}_{\mathfrak{m}}(L_{3}) \rightarrow H^{i}_{\mathfrak{m}}(\coker 
({d_4})) \rightarrow  H^{i+1}_{\mathfrak{m}}(L_{4})\longrightarrow \cdots
$$
$$
\hspace*{-0.6cm}(\text{II})\hspace*{1.5cm}\cdots \longrightarrow H^{i}_{\mathfrak{m}}(L_{2}) \rightarrow H^{i}_{\mathfrak{m}}(L_1) \rightarrow H^{i+1}_{\mathfrak{m}}(\coker ({d_4})) \rightarrow H^{i+1}_{\mathfrak{m}}(L_{2}) \longrightarrow\cdots
$$

$H^i_{\mathfrak{m}}(L_4)=0$ for $i\geq 2$ as $\dim L_4\leq 1$ thus $(\text{I})$ gives 
\begin{equation}\label{equation1}
 H^{2}_{\mathfrak{m}}(L_3) \cong H^{2}_{\mathfrak{m}}(\coker ({d_4})).
\end{equation}
As $\dim L_1 \leq 1$, by $(\text{II})$ we have 
\begin{equation*}
H^{i}_{\mathfrak{m}}(\coker ({d_4}))\cong H^{i}_{\mathfrak{m}}(L_2) \,\, \forall i\geq 3. 
\end{equation*}
As $\depth L_2 \geq 2$  and $H^i_{\mathfrak{m}}(L_1)=0$ for $i=0,1$, by $(\text{II})$
\begin{equation}\label{equation3}
H^{0}_{\mathfrak{m}}(L_1) \cong H^{1}_{\mathfrak{m}}(\coker ({d_4}))\,\, \text{and} \,\, H^0_{\mathfrak{m}}(\coker(d_4))=0.
\end{equation}
From the exact sequences $(\text{I})$ and $(\ref{equation3})$ we get the following short exact sequences
$$0 \rightarrow H^{1}_{\mathfrak{m}}(L_4) \rightarrow H^{1}_{\mathfrak{m}}(L_3) \rightarrow H^{0}_{\mathfrak{m}} (L_1) \rightarrow 0,$$
also  the exact sequences $(\text{II})$ and $(\ref{equation1})$ give
$$0 \rightarrow H^{1}_{\mathfrak{m}}(L_1) \rightarrow H^{2}_{\mathfrak{m}}(L_3) \rightarrow H^{2}_{\mathfrak{m}} (L_2) \rightarrow 0.$$
As a result we have
$$
a^i_{\mathfrak{m}}(L_3)=  \left\{ \begin{array}{ll}
          a^0_{\mathfrak{m}}(L_4)& \mbox{if $i=0 $}\\
      \max \lbrace a^1_{\mathfrak{m}}(L_4), a^0_{\mathfrak{m}}(L_1) \rbrace & \mbox{if $i=1 $}\\
      \max \lbrace a^2_{\mathfrak{m}}(L_2), a^1_{\mathfrak{m}}(L_1) \rbrace & \mbox{if $i=2 $}\\
        a^i_{\mathfrak{m}}(L_2)& \mbox {if $i\geq 3$} 
       \end{array} \right.
$$
which proves the statement.
\end{proof}
\begin{prop}\label{regZ_st}
Let $S=k[x_1,\dots ,x_n]$ be a polynomial ring, let $M$ be a finitely generated graded $S$-module with $\depth M\geq 2$ and let $I$ be a graded ideal of $S$ such that $\dim  S/I \leqslant 1$, then 
$$
\reg (Z_t (Z_{s} (I , M))) \leqslant \reg( Z_t(I)) +  \reg (Z_s (I,M)).
$$
\end{prop}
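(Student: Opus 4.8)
The plan is to prove the slightly more general inequality $\reg(Z_t(I,P))\le \reg(Z_t(I))+\reg(P)$ for every finitely generated graded $S$-module $P$ with $\depth P\ge 2$, and then specialize to $P=Z_s(I,M)$. The specialization is legitimate because $\depth Z_s(I,M)\ge 2$ whenever $\depth M\ge 2$: from $0\to Z_s(I,M)\to K_s(I,M)\to B_{s-1}(I,M)\to 0$ together with $\depth K_s(I,M)=\depth M$ (as $K_s(I,M)$ is a direct sum of shifts of $M$), the depth lemma gives $\depth Z_s(I,M)\ge \min\{\depth M,\ \depth B_{s-1}(I,M)+1\}\ge 2$, since a submodule of a module of positive depth has positive depth. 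The argument for general $P$ splits into two steps: first rewrite $\reg(Z_t(I,P))$ in terms of the regularities of the Koszul homologies $H_\bullet(I,P)$, and then compare these with the Koszul homologies of $S$.

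For the first step I would, for each $t$, splice the exact sequences $0\to H_{t+1}(I,P)\to K_{t+1}(I,P)/B_{t+1}(I,P)\to B_t(I,P)\to 0$ and $0\to B_t(I,P)\to Z_t(I,P)\to H_t(I,P)\to 0$ into the four term sequence
$$0\to H_{t+1}(I,P)\to K_{t+1}(I,P)/B_{t+1}(I,P)\to Z_t(I,P)\to H_t(I,P)\to 0 .$$
Its two outer terms are Koszul homologies, hence annihilated by $I$ and supported in dimension $\le\dim S/I\le 1$, while the third term $Z_t(I,P)$ has depth $\ge 2$ by the computation above; so Lemma \ref{4term} applies and controls $\reg(Z_t(I,P))$ through the neighbouring homologies and cokernels. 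Feeding these sequences into a suitable induction on $t$ should yield the closed formula $\reg(Z_t(I,P))=\max_{0<j<n}\{\reg(H_{t-j}(I,P))+j+1\}$ for $0<t<\mu(I)$; in particular the same identity holds for $P=S$, so the desired inequality reduces to comparing $\reg(H_k(I,P))$ with $\reg(H_k(I,S))$.

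For the second step I would resolve $P$ by a graded free complex $F_\bullet$ and run the spectral sequence of the double complex $K(I,S)\otimes F_\bullet$. Taking homology first in the Koszul direction (whose modules are free, so tensoring is exact) gives $E^2_{p,q}=\Tor^S_q(H_p(I,S),P)$, converging to $H_{p+q}(I,P)$; since regularity is subadditive along the induced filtration, $\reg(H_k(I,P))\le\max_{p+q=k}\{\reg\Tor^S_q(H_p(I,S),P)\}$. Combining this with the standard estimate $\reg\Tor^S_q(A,P)\le\reg(A)+\reg(P)+q$ and substituting into the formula of Step 1, a contribution to $H_{t-j}(I,P)$ coming from $\Tor^S_q(H_p(I,S),P)$ has $p+q=t-j$, so that
$$\reg(H_{t-j}(I,P))+j+1\le \reg(H_p(I,S))+\reg(P)+q+j+1=\reg(P)+\reg(H_p(I,S))+(t-p)+1 ,$$
using $q+j=t-p$. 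Taking the maximum over $p$ and recognizing $\max_p\{\reg(H_p(I,S))+(t-p)+1\}=\reg(Z_t(I))$ from the $P=S$ case of Step 1 yields $\reg(Z_t(I,P))\le\reg(Z_t(I))+\reg(P)$, with the homological degree $q$ of the Tor term exactly absorbed.

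The hard part will be Step 2: making the spectral sequence estimate rigorous — bounding the regularity of the subquotients $E^\infty_{p,q}\subseteq E^2_{p,q}$, securing the Tor regularity estimate in the required $+q$ form, and checking that the indices stay in the admissible window $0<t-p<n$ so that the maximum really reproduces $\reg(Z_t(I))$. It is exactly here that the two hypotheses do their work: $\dim S/I\le 1$ keeps every Koszul homology supported in dimension $\le 1$, and $\depth P\ge 2$ keeps every cycle $Z_t(I,P)$ of depth $\ge 2$, which are precisely the conditions under which Lemma \ref{4term} is available. I also expect to treat the degenerate ranges $t\le 0$ and $t\ge\mu(I)$ separately, where the cycles vanish or reduce to a single homology module and the inequality is immediate.
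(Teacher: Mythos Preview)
Your route is quite different from the paper's and, as written, has real gaps. The paper never passes through a cycle--homology formula. It instead invokes the canonical multiplication map $u_{s,t}\colon Z_t(I)\otimes Z_s(I,M)\to Z_t(I,Z_s(I,M))$ of Bruns--Conca--R\"omer, which by \cite[Prop.~5.1]{BCR} sits in an exact sequence
\[
0\to \ker u_{s,t}\to Z_t(I)\otimes Z_s(I,M)\to Z_t(I,Z_s(I,M))\to \Tor_1^S\!\Bigl(\tfrac{K_{s-1}(I,M)}{B_{s-1}(I,M)},\,Z_t(I)\Bigr)\to 0.
\]
Localizing off $V(I)$ makes $u_{s,t}$ an isomorphism, so the outer terms have dimension $\le 1$; since $\depth Z_t(I,Z_s(I,M))\ge 2$, Lemma~\ref{4term} applied to \emph{this} sequence gives $\reg Z_t(I,Z_s(I,M))\le \reg\bigl(Z_t(I)\otimes Z_s(I,M)\bigr)$, and then \cite[Cor.~3.1]{EHU} bounds the tensor product by $\reg Z_t(I)+\reg Z_s(I,M)$ (again using that $\Tor_1$ is supported on $V(I)$). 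No induction, no spectral sequence.

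The concrete problems with your plan: in Step~1, Lemma~\ref{4term} applied to your four-term sequence computes $\reg\bigl(K_{t+1}(I,P)/B_{t+1}(I,P)\bigr)$, the $L_3$ term, not $\reg Z_t(I,P)$; all you extract is $\reg Z_t(I,P)\le \reg\bigl(K_{t+1}(I,P)/B_{t+1}(I,P)\bigr)$, which does not launch the induction you want toward the closed formula. That formula \emph{is} Theorem~\ref{reginduc} of the paper, but it is proved \emph{after} the present proposition and by a \v{C}ech--Koszul double complex that collapses only because each $K_j(I,S)$ is free (so just $H^n_{\mathfrak m}$ survives); for a general $P$ with merely $\depth P\ge 2$ the vertical spectral sequence has many nonzero rows and the argument does not go through. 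In Step~2 the ``standard estimate'' $\reg\Tor_q^S(A,P)\le \reg A+\reg P+q$ is not standard without hypotheses; you would at minimum have to justify it here (e.g.\ via \cite{EHU}, checking its dimension condition for each pair $(H_p(I,S),P)$), and the boundary cases $p=0$ and $p=t-n+1$ of your final maximum need separate care before you may identify it with $\reg Z_t(I)$. The BCR map bypasses all of this.
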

\begin{proof}
By definition one has the following exact sequences,
\begin{equation*}
(\dagger)\hspace*{1.5cm}0 \rightarrow Z_{s} (I , M)) \rightarrow K_s(I, M) \overset{{d_t}}\rightarrow K_{s-1}(I,M)
\end{equation*}
\begin{equation*}
(\ddagger)\hspace*{1.5cm}0 \rightarrow Z_t (I,Z_{s} (I , M)) \rightarrow K_t(I, Z_s(I,M)) \overset{{d_t}}\rightarrow K_{t-1}(I,Z_s(I,M))
\end{equation*}
Note that  $K_s(I, M)$  and  $K_{s-1}(I, M)$ are  direct sums of copies of $M$,   $(\dagger)$ then implies that $\depth Z_{s} (I , M) \geqslant \min \{2,\depth M\}=2$.  Using $(\ddagger)$, $\depth Z_t (I,Z_{s} (I , M)) \geq \min \lbrace 2, \depth Z_s(I,M)\rbrace=2 $. 
 
 For the canonical map in  \cite[section 5] {BCR}
$$u_{s,t} : Z_t(I) \otimes Z_{s} (I , M) \rightarrow Z_t (I,Z_{s} (I , M))$$
Proposition 5.1 in \cite{BCR} gives an exact sequence,\\
$$
0 \rightarrow \ker ( u_{s,t})\rightarrow Z_t(I) \otimes Z_{s} (I , M) \rightarrow Z_t (Z_{s} (I , M)) \rightarrow \Tor_{1}^{S} (\dfrac{K_{s-1}(I,M)}{B_{s-1}(I,M) } ,Z_t(I))  \rightarrow 0.
$$ \\
Notice that after localization at prime ideals not in the support of $S/I$ all the Koszul cycles become direct sum of copies of $M$ and the map $u_{s,t}$ becomes an isomorphism. Therefore  $\Tor_{1}^{S} (\dfrac{K_{s-1}(I,M)}{B_{s-1}(I,M) } ,Z_t(I))$ and $\ker (u_{s,t})$ are supported in $S/I$, hence have a dimension at most $1$.

Thus the conditions of Lemma \ref{4term} are fulfilled, and this lemma gives:
 $$\reg (Z_t (Z_{s} (I , M))) \leqslant \reg (Z_t(I,M) \otimes Z_{s} (I , M)).$$ Notice that  $\Tor_{1}^{R} (Z_t(I) , Z_s(I,M))$ has Krull dimension at most $1$ because $Z_t(I)$ is free when we localize at prime ideals not in the support of $S/I$, so we apply Corollary 3.1 in \cite{EHU} to get 
 $$
  \reg (Z_t(I,M) \otimes Z_{s} (I , M) )\leqslant  \reg (Z_t(I) )+  \reg (Z_s (I,M)).
 $$
 As a result we get 
 $$
\reg (Z_t (Z_{s} (I , M))) \leqslant \reg( Z_t(I) )+  \reg (Z_s (I,M)).
$$
\end{proof}
\begin{thm}\label{convex}
Let $S=k[x_1, \dots , x_n]$ and $I$ be a graded ideal of $S$, if $\dim S/I \leqslant 1$ and characteristic of $k$ is $0$ or bigger than $s+t$, then
$$
 \reg (Z_{s+t}(I)) \leqslant  \reg( Z_t(I) )+  \reg (Z_s(I)).
$$
\end{thm}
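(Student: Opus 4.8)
The plan is to deduce the inequality directly from Lemma~\ref{summand} and Proposition~\ref{regZ_st}; the only genuinely new ingredient is the characteristic hypothesis, whose role is precisely to make the relevant binomial coefficient invertible so that Lemma~\ref{summand} applies. First I would observe that $\binom{s+t}{s}=\tfrac{(s+t)!}{s!\,t!}$ has all of its prime divisors bounded above by $s+t$; hence when $k$ has characteristic $0$ or characteristic larger than $s+t$, the integer $\binom{s+t}{s}$ maps to a nonzero, and therefore invertible, element of $k\subseteq S$. This is exactly the hypothesis needed to invoke Lemma~\ref{summand}.

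Applying Lemma~\ref{summand} with $M=S$ then shows that $Z_{s+t}(I)$ is a direct summand of $Z_s(I,Z_t(I))$. Since local cohomology commutes with finite direct sums, the invariants $a^i_{\mathfrak{m}}$ of a direct summand are dominated by those of the ambient module, so regularity is monotone under passage to direct summands; this gives
$$
\reg\bigl(Z_{s+t}(I)\bigr)\le \reg\bigl(Z_s(I,Z_t(I))\bigr).
$$

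It remains to bound the right-hand side. Here I would apply Proposition~\ref{regZ_st} with the roles of $s$ and $t$ interchanged and with $M=S$. The hypotheses hold: $\dim S/I\le 1$ by assumption, and $\depth S=n\ge 2$, so that $M=S$ has depth at least $2$ (the degenerate cases $n\le 1$ being trivial, as then the higher cycle modules are free or zero and $\reg$ carries no content). This yields
$$
\reg\bigl(Z_s(I,Z_t(I))\bigr)=\reg\bigl(Z_s(Z_t(I))\bigr)\le \reg\bigl(Z_s(I)\bigr)+\reg\bigl(Z_t(I)\bigr),
$$
and chaining the two displayed inequalities produces the claimed bound $\reg(Z_{s+t}(I))\le \reg(Z_t(I))+\reg(Z_s(I))$.

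The substance of the argument is thus entirely carried by Proposition~\ref{regZ_st}, whose proof already absorbed the hard analysis: the \cite{BCR} exact sequence for $u_{s,t}$, the support estimates forcing $\ker u_{s,t}$ and the pertinent $\Tor$ module into dimension at most $1$, and the regularity comparison of Lemma~\ref{4term}. Within the present theorem the only points requiring care are the bookkeeping of the two cycle indices in the iterated module $Z_s(I,Z_t(I))$ and the verification that the characteristic condition really delivers the invertibility of $\binom{s+t}{s}$; I do not anticipate any further obstacle.
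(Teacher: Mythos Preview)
Your proposal is correct and follows exactly the paper's approach: the paper's proof is the single sentence ``The theorem follows from Proposition~\ref{regZ_st} and Lemma~\ref{summand},'' and you have simply unpacked this by verifying that the characteristic hypothesis makes $\binom{s+t}{s}$ invertible (so Lemma~\ref{summand} applies), that regularity does not increase on direct summands, and that $M=S$ satisfies the depth hypothesis of Proposition~\ref{regZ_st}. Your handling of the index bookkeeping and the degenerate $n\le 1$ cases is a welcome bit of extra care beyond what the paper spells out.
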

\begin{proof}
The theorem follows from Proposition \ref{regZ_st} and Lemma \ref{summand}.
\end{proof}
\section{regularity of Koszul homologies } \label{sec3}
We start this section by a fact which is likely part of folklore but we did not find in the classical references.
\begin{prop}\label{zi in mKi}
Let $S=k[x_1,\dots , x_n]$ be a polynomial ring and $I$ be an ideal of $S$ minimaly generated by $f_1\dots, f_r$, then $Z_i(I)\subset \mathfrak{m}K_i(I)$ for all $i$.
\end{prop}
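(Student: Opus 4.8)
The plan is to reduce the statement for an arbitrary homological degree $i$ to the case $i=1$, where it becomes precisely the defining property of a minimal generating set. Recall that $K_i(I)=\wedge^i F_0$ is free with basis $\{e_J\}$ indexed by the $i$-element subsets $J=\{j_1<\cdots<j_i\}$ of $\{1,\dots,r\}$, and that the Koszul differential is $d(e_J)=\sum_{k=1}^i(-1)^{k-1}f_{j_k}\,e_{J\setminus\{j_k\}}$. Writing a cycle as $z=\sum_J a_J e_J$ with $a_J\in S$, the conclusion $z\in\mathfrak{m}K_i(I)$ is equivalent to showing that $a_J\in\mathfrak{m}$ for every $i$-subset $J$.

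First I would record the base case $i=1$. The map $\phi\colon F_0\to S$ with $\phi(e_i)=f_i$ has $Z_1(I)=\ker\phi$ equal to the module of syzygies of $f_1,\dots,f_r$, and the minimality of this generating set is equivalent, by right-exactness of $-\otimes_S k$ together with Nakayama, to the inclusion $\ker\phi\subseteq\mathfrak{m}F_0=\mathfrak{m}K_1(I)$. This is the only point at which the minimality hypothesis is used.

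Next I would expand the equation $dz=0$ coefficientwise. Fixing an $(i-1)$-subset $L$ and collecting the basis vector $e_L$ in $dz$ yields a relation $\sum_{j\notin L}\varepsilon(j,L)\,a_{L\cup\{j\}}\,f_j=0$ for suitable signs $\varepsilon(j,L)=\pm1$. Reading the left-hand side as $\phi$ applied to $v_L:=\sum_{j\notin L}\varepsilon(j,L)\,a_{L\cup\{j\}}\,e_j$ shows that $v_L\in\ker\phi=Z_1(I)$, hence $v_L\in\mathfrak{m}F_0$ by the base case. Comparing coefficients in this membership gives $a_{L\cup\{j\}}\in\mathfrak{m}$ for every $j\notin L$.

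Finally, any $i$-subset $J$ can be written as $J=L\cup\{j_0\}$ with $L=J\setminus\{j_0\}$, so the previous step applied to this $L$ gives $a_J\in\mathfrak{m}$; since $J$ is arbitrary, $z\in\mathfrak{m}K_i(I)$, and no induction on $i$ is required. The only delicate point will be the sign bookkeeping in the Koszul differential, but this turns out to be immaterial: we merely need that the $e_L$-coefficient of $dz$ is $\phi$ of a genuine element of $K_1(I)$, after which the minimal-syzygy property of Step one does all the work regardless of the exact signs.
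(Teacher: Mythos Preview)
Your proof is correct and rests on the same observation as the paper's: for any $(i-1)$-subset $L$, the $e_L$-coefficient of $\partial z$ is a relation $\sum_{j\notin L}\pm a_{L\cup\{j\}}f_j=0$, and minimality of the generating set forces each $a_{L\cup\{j\}}\in\mathfrak{m}$. The paper packages this as a short contradiction (assume some $a_J$ is a unit, choose one specific $L$, and read off a forbidden relation among the $f_j$), while you phrase it as an explicit reduction to the $i=1$ syzygy statement applied to every $L$; the underlying mechanism is identical.
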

\begin{proof}
Suppose it is not, then there exist $z\in Z_i(I)$ that is not in $\mathfrak{m}K_i(I)$. By symmetry we may assume it has the form:
$$
z= e_1\wedge\cdots \wedge e_i + \sum_{j>i}c_j e_1\wedge \cdots\wedge e_{i-1}\wedge e_j+ \text{terms without }e_1\wedge \cdots \wedge e_{i-1}.
$$
Since $\partial(z)=0$ it follows that $(-1)^if_i+ \sum_{j>i}(-1)^jc_jf_j=0 $, as it is the coefficient of $e_1\wedge \cdots \wedge e_{i-1}$ in the expression of $\partial(z)$, which is a contradiction with the fact that $f_1,\dots , f_r $ is a minimal set of generators for $I$.
\end{proof}
\begin{cor}\label{regZ_igeqd_1+...d_i}
Let $S=k[x_1,\dots , x_n]$ be a polynomial ring and $I=(f_1,\dots , f_r)$ be a homogeneous ideal of $S$. Let $ f_1,\dots , f_r$ be a  minimal generating set of $I$ and $\deg(f_i)=d_i$ where $d_1\geq d_2\geq \cdots \geq d_r$, then  $\reg(Z_i(I))> d_1+\cdots +d_i$.
\end{cor}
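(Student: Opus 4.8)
The plan is to bound $\reg(Z_i(I))$ below by the largest degree of a minimal generator, $t_0(Z_i(I)):=\max\{j:\beta_{0,j}(Z_i(I))\neq 0\}$, using $\reg(M)\ge t_0(M)$. Thus it suffices to produce a minimal generator of $Z_i(I)$ in degree strictly above $D:=d_1+\cdots+d_i$. I work in the range $0<i<\mu(I)=r$, where $Z_i(I)\neq 0$ and $e_1\wedge\cdots\wedge e_{i+1}$ is an honest basis element of $K_{i+1}(I)$ (for $i\ge r$ one has $Z_i(I)=0$ and nothing to prove). The starting point is Proposition \ref{zi in mKi}: from $Z_i(I)\subseteq\mathfrak{m}K_i(I)$ the surjection $\partial\colon K_i(I)\to B_{i-1}(I)$ is a minimal free cover, so $K_i(I)$ is the zeroth free module in the minimal free resolution of $B_{i-1}(I)$ and $Z_i(I)$ is its first syzygy module; hence $\beta_{0,j}(Z_i(I))=\beta_{1,j}(B_{i-1}(I))$ for all $j$. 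Since $d_1\ge\cdots\ge d_r$, the top degree of a basis element of $K_i(I)$ is $D$, realized by $e_1\wedge\cdots\wedge e_i$, so $g:=\partial(e_1\wedge\cdots\wedge e_i)$ is a minimal generator of $B_{i-1}(I)$ of degree $D$. It is now enough to find a minimal first syzygy of $B_{i-1}(I)$ of degree $>D$, i.e. $\beta_{1,j}(B_{i-1}(I))\neq 0$ for some $j>D$.

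I would argue by contradiction: suppose every minimal first syzygy has degree $\le D$. In the minimal presentation matrix of $B_{i-1}(I)$ all columns then have degree $\le D$, so the entries lying in the rows indexed by the degree-$D$ basis elements are scalars; minimality forces these entries into $\mathfrak{m}$, hence they vanish. Thus those rows are zero and the degree-$D$ generators split off as a free direct summand $\cong S(-D)^s$, with $g$ one of its basis elements. In particular there is a graded projection $\pi\colon B_{i-1}(I)\to S(-D)$ onto the $g$-coordinate, with $\pi(g)=1$ while $\pi$ annihilates every other minimal generator of $B_{i-1}(I)$ (the remaining degree-$D$ ones by the choice of coordinate, the lower-degree ones for degree reasons, as a graded map to $S(-D)$ kills everything of degree $<D$). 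Composing, $\psi:=\pi\circ\partial\colon K_i(I)\to S(-D)$ is the $S$-linear map with $\psi(e_1\wedge\cdots\wedge e_i)=1$ and $\psi(e_\alpha)=0$ for every other basis element $e_\alpha$.

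The contradiction comes from testing $\psi$ against the boundary of the top $(i+1)$-box. Set $w:=\partial(e_1\wedge\cdots\wedge e_{i+1})=\sum_{l=1}^{i+1}(-1)^{l+1}f_l\,(e_1\wedge\cdots\wedge\widehat{e_l}\wedge\cdots\wedge e_{i+1})\in B_i(I)\subseteq Z_i(I)$, of degree $d_1+\cdots+d_{i+1}>D$. On one hand $\partial(w)=0$, so $\psi(w)=\pi(\partial(w))=0$. On the other hand, among the basis elements appearing in $w$ the only one not killed by $\psi$ is $e_1\wedge\cdots\wedge e_i$, coming from the $l=i+1$ term, so $\psi(w)=(-1)^{i}f_{i+1}\neq 0$. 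This is absurd. Hence some minimal first syzygy of $B_{i-1}(I)$ has degree $>D$, so $t_0(Z_i(I))=t_1(B_{i-1}(I))>D$ and $\reg(Z_i(I))\ge t_0(Z_i(I))>d_1+\cdots+d_i$.

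I expect the only delicate point to be the splitting step: converting the hypothesis ``no first syzygy of degree $>D$'' into the clean statement that the degree-$D$ generators, and in particular $g$, split off with a projection annihilating all other generators. This is exactly where minimality of the resolution (entries in $\mathfrak{m}$) is used, and it is what allows the single cycle $w=\partial(e_1\wedge\cdots\wedge e_{i+1})$ to furnish the obstruction; once the projection is in hand the computation $\psi(w)=\pm f_{i+1}$ is immediate. Note in particular that $w$ itself need not be a minimal generator of $Z_i(I)$ (it can lie in $\mathfrak{m}Z_i(I)$ when the degrees are large), which is why the argument is routed through the non-splitting of the top generator rather than through $w$ directly.
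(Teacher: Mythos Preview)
Your proof is correct, and it uses the same two ingredients as the paper's proof: Proposition~\ref{zi in mKi} and the element $w=\partial(e_1\wedge\cdots\wedge e_{i+1})\in Z_i(I)$. However, the paper reaches the conclusion by a much shorter route, without the splitting/projection detour. The paper simply observes that $w$ has a nonzero coefficient on the top basis element $e_1\wedge\cdots\wedge e_i$ (namely $\pm f_{i+1}$); writing $w=\sum_k a_k z_k$ as an $S$-linear combination of minimal generators $z_k$ of $Z_i(I)$, at least one $z_k$ must therefore have a nonzero coefficient on $e_1\wedge\cdots\wedge e_i$. Since $z_k\in\mathfrak{m}K_i(I)$ by Proposition~\ref{zi in mKi}, that coefficient lies in $\mathfrak{m}$, so $\deg z_k\ge D+1$. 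This is exactly the ``minimal generator of degree $>D$'' you are after, obtained in one line.

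Your argument rediscovers this fact indirectly: under your contradiction hypothesis the map $\psi=\pi\circ\partial$ is nothing but the coefficient-of-$e_1\wedge\cdots\wedge e_i$ functional on $K_i(I)$, and the contradiction $0=\psi(w)=\pm f_{i+1}$ is the same computation the paper does directly. What your route buys is a careful justification of why $w$ not being a minimal generator of $Z_i(I)$ is harmless; the paper's version sidesteps this concern entirely by never needing $w$ to be minimal---it only needs \emph{some} minimal generator in the expansion of $w$ to hit the top basis element. Both arguments are valid; the paper's is more economical.
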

\begin{proof}
Fix a basis element $e_{1}\wedge\cdots \wedge e_{i}\in K_i$. Since $K_{\bullet}(I)$ is a complex, 
$$
\partial(e_{1}\wedge\cdots \wedge e_{i}\wedge e_{i+1})= (-1)^{i+1}f_{i+1} e_1\wedge \cdots \wedge e_i+ \sum_{0<j<i+1}(-1)^j f_j e_1\wedge \cdots \wedge \hat{e_j}\wedge \cdots \wedge e_{i+1} \in Z_i(I).
$$
Therefore an element of the form  $ge_{1}\wedge\cdots \wedge e_{i}$ should appear as a summand in a minimal generating element of $Z_i(I)$. By Proposition \ref{zi in mKi}, $g\in \mathfrak{m}$. So there exist minimal generator of degree at least $d_1+\cdots + d_i+1$. Hence $\reg(Z_i(I))> d_1+\cdots +d_i$.
\end{proof}
M. Chardin and P. Symonds state an approach in \cite{CS} for investigating the regularity of cycles of a general complex by the regularity of previous homologies. Here we determine a concrete relaion between regularity of cycles and homologies of a koszul complex.
\begin{thm} \label{reginduc}
Let $S=k[x_1,\dots , x_n]$ be a polynomial ring and $I$ be a homogeneous ideal of $S$ minimally generated by $f_1,\dots ,f_r$. If $\dim S/I\leq  1$, then for $0<i<r$:
\begin{equation}
\reg(Z_i(I))= \max_{0<j<n} \lbrace \reg(H_{i-j}(I))+j+1\rbrace.
\end{equation}
\end{thm}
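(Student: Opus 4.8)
The plan is to compute each $a^p_{\mathfrak{m}}(Z_i(I))$ in terms of the local cohomology of the homologies $H_\bullet(I)$ and then assemble $\reg(Z_i(I))=\max_p\{a^p_{\mathfrak{m}}(Z_i(I))+p\}$. The engine is the two families of short exact sequences of the Koszul complex, $(\mathrm{A}_p): 0\to Z_p(I)\to K_p(I)\to B_{p-1}(I)\to 0$ and $(\mathrm{B}_p): 0\to B_p(I)\to Z_p(I)\to H_p(I)\to 0$, together with three vanishing facts: $K_p(I)$ is free so $H^q_{\mathfrak{m}}(K_p(I))=0$ for $q<n$; each $Z_p(I)$ has depth $\geq 2$ (exactly as established in the proof of Proposition \ref{regZ_st}), so $H^0_{\mathfrak{m}}=H^1_{\mathfrak{m}}=0$; and since $\dim S/I\leq 1$ every $H_p(I)$ is an $S/I$-module of dimension $\leq 1$, so $H^q_{\mathfrak{m}}(H_p(I))=0$ for $q\geq 2$.

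First I would read off connecting isomorphisms from the long exact sequences. From $(\mathrm{A}_i)$ and freeness of $K_i(I)$ one gets $H^p_{\mathfrak{m}}(Z_i(I))\cong H^{p-1}_{\mathfrak{m}}(B_{i-1}(I))$ for $2\leq p\leq n-1$; from $(\mathrm{B}_{i-1})$ and the depth of $Z_{i-1}(I)$ one gets $H^1_{\mathfrak{m}}(B_{i-1}(I))\cong H^0_{\mathfrak{m}}(H_{i-1}(I))$, a short exact sequence $0\to H^1_{\mathfrak{m}}(H_{i-1}(I))\to H^2_{\mathfrak{m}}(B_{i-1}(I))\to H^2_{\mathfrak{m}}(Z_{i-1}(I))\to 0$, and isomorphisms in degrees $\geq 3$. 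Alternating these and descending the homological index, I expect to produce for each $3\leq p\leq n-1$ a short exact sequence
$$0\to H^1_{\mathfrak{m}}(H_{i-p+2}(I))\to H^p_{\mathfrak{m}}(Z_i(I))\to H^0_{\mathfrak{m}}(H_{i-p+1}(I))\to 0,$$
with the degenerate bottom case $H^2_{\mathfrak{m}}(Z_i(I))\cong H^0_{\mathfrak{m}}(H_{i-1}(I))$. Since in a graded short exact sequence the top nonzero degree of the middle term equals the maximum of those of the two ends, this yields $a^p_{\mathfrak{m}}(Z_i(I))=\max\{a^1_{\mathfrak{m}}(H_{i-p+2}(I)),\,a^0_{\mathfrak{m}}(H_{i-p+1}(I))\}$, matching the $j=p-2$ (from $H^1$) and $j=p-1$ (from $H^0$) terms of the claimed formula.

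The delicate step is the top degree $p=n$, where $H^n_{\mathfrak{m}}(K_i(I))\neq 0$ and the clean isomorphisms break. There $(\mathrm{A}_i)$ gives only the four-term sequence
$$0\to H^{n-1}_{\mathfrak{m}}(B_{i-1}(I))\to H^n_{\mathfrak{m}}(Z_i(I))\to H^n_{\mathfrak{m}}(K_i(I))\to H^n_{\mathfrak{m}}(B_{i-1}(I))\to 0.$$
The submodule $H^{n-1}_{\mathfrak{m}}(B_{i-1}(I))$ descends exactly as above and contributes $H^1_{\mathfrak{m}}(H_{i-n+2}(I))$ and $H^0_{\mathfrak{m}}(H_{i-n+1}(I))$, i.e. the $j=n-2$ and $j=n-1$ terms; meanwhile the quotient of $H^n_{\mathfrak{m}}(Z_i(I))$ by this submodule embeds into $H^n_{\mathfrak{m}}(K_i(I))$ and so has top degree at most $(d_1+\cdots+d_i)-n$. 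By Corollary \ref{regZ_igeqd_1+...d_i} we know $\reg(Z_i(I))>d_1+\cdots+d_i$, so this free contribution lies strictly below the regularity and can be discarded. This simultaneously gives $\reg(Z_i(I))\leq\max_{0<j<n}\{\reg(H_{i-j}(I))+j+1\}$ and the reverse inequality for every term of the right-hand side except one.

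The hard part will be that single leftover term $a^1_{\mathfrak{m}}(H_{i-n+1}(I))+(n+1)$, the $H^1$-contribution at $j=n-1$: it would have to appear in $H^{n+1}_{\mathfrak{m}}$, so it is realized as a subquotient of no $H^p_{\mathfrak{m}}(Z_i(I))$ and must instead be shown to be dominated. My plan is to bound it by climbing the complex — using the inclusion $H^1_{\mathfrak{m}}(H_{i-n+1}(I))\hookrightarrow H^2_{\mathfrak{m}}(B_{i-n+1}(I))$ from $(\mathrm{B}_{i-n+1})$ and then the isomorphisms coming from the $(\mathrm{A})$-sequences — to compare $a^1_{\mathfrak{m}}(H_{i-n+1}(I))$ with the degrees of the free modules $K_\bullet(I)$, and then to invoke the strict bound of Corollary \ref{regZ_igeqd_1+...d_i} to conclude $a^1_{\mathfrak{m}}(H_{i-n+1}(I))+n+1\leq d_1+\cdots+d_i+1\leq\reg(Z_i(I))$. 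I expect the essential use of the minimality of the generating set (through Proposition \ref{zi in mKi} and Corollary \ref{regZ_igeqd_1+...d_i}) to be concentrated precisely here; controlling this one boundary term is exactly what upgrades the inequality to the stated equality.
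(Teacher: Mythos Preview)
Your proposal is correct and arrives at exactly the same identities the paper obtains, namely $a^2_{\mathfrak m}(Z_i)=a^0_{\mathfrak m}(H_{i-1})$, $a^p_{\mathfrak m}(Z_i)=\max\{a^1_{\mathfrak m}(H_{i-p+2}),a^0_{\mathfrak m}(H_{i-p+1})\}$ for $2<p<n$, the analogous bound at $p=n$ with the extra free term $d_1+\cdots+d_i-n$, and the separate estimate $a^1_{\mathfrak m}(H_{i-n+1})\le d_1+\cdots+d_{i-1}-n$ needed for the lone unmatched term; both arguments then invoke Corollary~\ref{regZ_igeqd_1+...d_i} to discard the free contributions. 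The difference is purely in packaging: the paper runs the two spectral sequences of the double complex $C^{\bullet}_{\mathfrak m}\otimes K^i_{\bullet}(I)$ and reads these equalities off the $E_2$-pages, whereas you unwind the same information by iterating the long exact sequences attached to $0\to Z_p\to K_p\to B_{p-1}\to 0$ and $0\to B_p\to Z_p\to H_p\to 0$. Your route is more elementary (no spectral-sequence bookkeeping) at the cost of a longer inductive chase; the paper's route is quicker once the double complex is set up and makes the boundary estimate $a^1_{\mathfrak m}(H_{i-n+1})\le \en(E^\infty_{n,i-1})\le d_1+\cdots+d_{i-1}-n$ immediate. One small point to tighten in your sketch: the last ``climb'' does not stay an isomorphism once you reach cohomological degree $n$, so to bound $a^1_{\mathfrak m}(H_{i-n+1})$ by a free term you should finish with the surjections $H^n_{\mathfrak m}(K_i)\twoheadrightarrow H^n_{\mathfrak m}(B_{i-1})\twoheadrightarrow H^n_{\mathfrak m}(Z_{i-1})$ (from $(\mathrm A_i)$ and $(\mathrm B_{i-1})$), which gives exactly the inequality you state.
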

\begin{proof}
Let $I=(f_1,\dots , f_r)$ and $\deg(f_i)=d_i$ where $d_1\geq d_2\geq \cdots \geq d_r$. 
Let $K_{\bullet}^i(I)$ be the $i$-th truncated Koszul complex of $I$ as follows:
\begin{center}
 $K_{\bullet}^i(I) : 0\rightarrow Z_i(I) \longrightarrow^{^{\hspace{-0.5cm} {{\partial}^{\prime}_i}} } {\hspace{0.2cm}}K_i(I) \longrightarrow ^{^{\hspace{-0.5cm} {{\partial}_i} }} {\hspace{0.2cm}} K_{i-1}(I)\longrightarrow ^{^{\hspace{-0.6cm} {{\partial}_{i-1}} }} {\hspace{0.2cm}} \cdots \longrightarrow ^{^{\hspace{-0.5cm} {{\partial}_1} }} {\hspace{0.2cm}} K_0(I) \rightarrow 0$
 \end{center} 
and $C^{\bullet}$ be the \v{C}ech complex. Consider double complex $X= C^{\bullet}\bigotimes K_{\bullet}^r(I)$ 
 where $X_{p,q}= {C^{\bullet}}^{-p}\otimes {K_{\bullet}^r}(I)_{q}  $, and its associated spectral sequence. We first compute homology vertically and we get 
$$\begin{matrix}
      H^{0}_{\mathfrak{m}} (Z_i(I)) & 0 & 0 & \cdots & 0    \\\\
      H^{1}_{\mathfrak{m}} (Z_i(I)) & 0 & 0 & \cdots & 0    \\\\
      H^{2}_{\mathfrak{m}} (Z_i(I))  & 0 & 0 & \cdots & 0    \\\\
      \vdots & \vdots & \vdots & \vdots &\vdots \\\\
      H^{n}_{\mathfrak{m}} (Z_i(I))  \longrightarrow ^{^{\hspace{-0.6cm} {{H^n_{\mathfrak{m}}(\partial^{\prime}_i) }}}}   &H^{n}_{\mathfrak{m}} (K_i(I)) \longrightarrow ^{^{\hspace{-0.7cm} {{H_{\mathfrak{m}}^n(\partial}_i)} }} &H^{n}_{\mathfrak{m}} (K_{i-1}(I))  \longrightarrow ^{^{\hspace{-0.7cm} {{H_{\mathfrak{m}}^n(\partial}_{i-1})} }} & \cdots \longrightarrow ^{^{\hspace{-0.5cm} {{H_{\mathfrak{m}}^n(\partial}_1)} }} & H^{n}_{\mathfrak{m}} (K_0(I)).
\end{matrix}
$$
By continuing the process we have: 
\[ E^{\infty}_{p,q}= E^2_{p,q} \left\{ \begin{array}{ll}
         H^p_{\mathfrak{m}}(Z_q(I)) & \mbox{if $q=i+1, p<n $}\\
       H_q(H^n_{\mathfrak{m}}(K_{\bullet}^i(I))) & \mbox{if $p=n, q\leq i$}\\
       \ker (H_{\mathfrak{m}}^n(\partial'_i))& \mbox {if $(p,q)=(n,i+1)$} \\
       0 & \mbox{Otherwise}
       .\end{array} \right. \]

Notice that since $a^n_{\mathfrak{m}}(K_j(I))= d_1+\cdots + d_j-n$, it follows that for all $0\leq q\leq i$ we have $\en (E^{\infty}_{n,q})\leq \en (E^1_{n,q})= d_1+\cdots + d_q-n$. 

On the other hand, if we start taking homology horizontally we have ${E^{\prime}}^2_{p,q}= H^p_{\mathfrak{m}}(H_q(I)) $ for all $p$ and $q < i$ and ${E^{\prime}}^2_{p,q}=0$ for $q=i,i+1$. Notice that $\dim H_i(I)\leq \dim S/I\leq 1$, therefore spectral sequence collapses in the second page and we have:

\[ {E^{\prime}}^{\infty}_{p,q}= {E^{\prime}}^2_{p,q} \left\{ \begin{array}{ll}
         H^p_{\mathfrak{m}}(H_q(I))& \mbox{if $p= 0,1 \,\, \text{and}\,\, q<  i  $}\\
       0 & \mbox {otherwise} 
       .\end{array} \right. \]

The comparison of two spectral sequences gives
\begin{align*}
 &H^0_{\mathfrak{m}}(Z_i(I))= H^1_{\mathfrak{m}}(Z_i(I))=0\\
 &a^2_{\mathfrak{m}}(Z_i(I))= a^0_{\mathfrak{m}}(H_{i-1}(I))\\
 & a^j_{\mathfrak{m}}(Z_i(I))= \max \lbrace a^1_{\mathfrak{m}}(H_{i-j+2}(I)), 
 a^0_{\mathfrak{m}}(H_{i-j+1}(I))\rbrace,\,\,  \forall\,\, 2<j<n
  \end{align*}
  In addition, for the last local cohomology we have 
 $$a^n_{\mathfrak{m}}(Z_i(I))\leq \max \lbrace a^1_{\mathfrak{m}}(H_{i-n+2}(I)), 
 a^0_{\mathfrak{m}}(H_{i-n+1}(I)), d_1+\cdots d_i-n \rbrace,$$
 furthermore 
 $$
 a^n_{\mathfrak{m}}(Z_i(I))=  \max \lbrace a^1_{\mathfrak{m}}(H_{i-n+2}(I)), 
 a^0_{\mathfrak{m}}(H_{i-n+1}(I))\rbrace
 $$
if $a^n_{\mathfrak{m}}(Z_i(I)) > d_1+\cdots d_i-n$.  By corollary \ref{regZ_igeqd_1+...d_i},  we can deduce that

$$a^n_{\mathfrak{m}}(Z_i(I))=  \max \lbrace a^1_{\mathfrak{m}}(H_{i-n+2}(I)), 
 a^0_{\mathfrak{m}}(H_{i-n+1}(I))\rbrace \,\, \text{or} \,\,a^n_{\mathfrak{m}}(Z_i(I))+n < \reg (Z_i(I))$$
 
In addition, the comparison of the two spectral sequences and Corollary  \ref{regZ_igeqd_1+...d_i} give
 $$a^1_{\mathfrak{m}}(H_{i-n+1}(I))\leq \en  (E^{\infty}_{n,i-1})\leq  d_1+\cdots + d_{i-1}-n< \reg(Z_i(I))-n .$$
 As a result we have:
 \begin{align*}
\reg(Z_i(I)) &=  \max_{0\leq j\leq n} \lbrace a^j_{\mathfrak{m}}(Z_i(I)))+j\rbrace\\
&= \max_{3\leq j\leq  n} \lbrace a^0_{\mathfrak{m}}(H_{i-1}(I))+2, a^1_{\mathfrak{m}}(H_{i-j+2}(I))+j, a^0_{\mathfrak{m}}(H_{i-j+1}(I)) +j ,d_1+\cdots d_i \rbrace\\
&=\max_{2\leq j\leq n} \lbrace \reg(H_{i-j+1}(I))+j\rbrace.
\end{align*}
 


\end{proof}
\begin{rem}
From the proof of the Theorem \ref{reginduc}, the following equality also holds
$$
\reg(Z_i(I))= \max_{j>0} \lbrace \reg(H_{i-j}(I))+j+1\rbrace.
$$
\end{rem}
As a consequence of the Theorems \ref{reginduc} and \ref{convex} we give a regularity bound for Koszul homologies in dimension at most $1$.

 \begin{thm} \label{convexityhomology}
 Let $S=k[x_1,\dots , x_n]$ be a polynomial ring and $I$ be a homogeneous ideal of $S$. If $\dim S/I\leq  1$, then for all $i,j\geq 1$ we have the following regularity bound  for the  Koszul homologies of $I$.\\
\begin{equation}\label{equation}
\reg(H_{i+j-1}(I))\leq \max_{0< \alpha,\beta<n}\lbrace \reg(H_{i-\alpha}(I))+\reg(H_{j-\beta}(I))+\alpha+\beta\rbrace.
\end{equation}
\end{thm}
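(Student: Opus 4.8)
The plan is to derive the homology bound by chaining the two main results already proved: Theorem~\ref{reginduc}, which expresses $\reg(Z_i(I))$ in terms of the regularities of the lower Koszul homologies, and Theorem~\ref{convex}, the subadditivity of the regularity of Koszul cycles in dimension at most $1$. The strategy is to sandwich the homologies between cycles. Specifically, for a fixed homological index $m$ (here $m = i+j-1$), I would isolate a single homology regularity $\reg(H_m(I))$ on the left from the expression for $\reg(Z_{m+1}(I))$, and then bound that cycle regularity from above using subadditivity applied to a suitable splitting $m+1 = s+t$.

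First I would apply Theorem~\ref{reginduc} in the direction that bounds a homology by a cycle. Since
\begin{equation*}
\reg(Z_{i+j}(I)) = \max_{0 < \ell < n} \lbrace \reg(H_{i+j-\ell}(I)) + \ell + 1 \rbrace,
\end{equation*}
taking the term $\ell = 1$ gives the lower bound
\begin{equation*}
\reg(H_{i+j-1}(I)) + 2 \leq \reg(Z_{i+j}(I)).
\end{equation*}
Next I would apply Theorem~\ref{convex} with the splitting $i+j = i + j$, obtaining
\begin{equation*}
\reg(Z_{i+j}(I)) \leq \reg(Z_i(I)) + \reg(Z_j(I)).
\end{equation*}
Finally I would expand each of $\reg(Z_i(I))$ and $\reg(Z_j(I))$ back through Theorem~\ref{reginduc}: writing $\reg(Z_i(I)) = \max_{0<\alpha<n}\lbrace \reg(H_{i-\alpha}(I)) + \alpha + 1\rbrace$ and similarly for $j$ with index $\beta$. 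Combining the three displays and subtracting the constant $2$ from both sides should collapse the additive constants ($\alpha + 1 + \beta + 1 - 2 = \alpha + \beta$) to yield exactly the claimed bound.

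The main obstacle I anticipate is bookkeeping at the boundary of the index ranges and confirming that the constants line up precisely to $\alpha + \beta$ rather than $\alpha + \beta \pm 1$. The statement of Theorem~\ref{reginduc} uses two slightly different index conventions (one over $0<j<n$ with shift $+j+1$, and the equivalent form in the following remark over $j>0$), so I would need to fix the convention under which $\reg(Z_i(I)) = \max_\alpha\lbrace \reg(H_{i-\alpha}(I)) + \alpha + 1\rbrace$ holds and check it matches the ranges $0 < \alpha, \beta < n$ in the target inequality. A secondary point is that Theorem~\ref{reginduc} is stated for $0 < i < r = \mu(I)$, so I should verify that the relevant cycles $Z_i$, $Z_j$, $Z_{i+j}$ fall in the admissible range, or observe that the inequality is vacuous or trivial when indices run outside it (for instance when $H_{i+j-1}(I)=0$). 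Once the index arithmetic is pinned down, the proof is a direct substitution with no further hard analysis, since all the local-cohomology and spectral-sequence work has been absorbed into the two cited theorems.
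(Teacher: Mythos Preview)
Your proposal is correct and matches the paper's proof essentially line for line: the paper also bounds $\reg(H_{i+j-1}(I))+2$ by $\reg(Z_{i+j}(I))$ via Theorem~\ref{reginduc}, applies Theorem~\ref{convex} to split $\reg(Z_{i+j}(I))\leq \reg(Z_i(I))+\reg(Z_j(I))$, and then re-expands each cycle regularity through Theorem~\ref{reginduc} to get the $\alpha+\beta+2$ on the right, cancelling the $+2$. The boundary and index-range concerns you flag are not addressed in the paper's proof either.
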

\begin{proof}
By Theorem \ref{convex}  we have the following inequality for all $i,j$

$$\reg(Z_{i+j}(I)) \leq \reg(Z_i(I))+\reg(Z_j(I)).$$

By using Theorem \ref{reginduc} we have

\begin{align*}
\reg(H_{i+j-1}(I))+2 &\leq   \reg(Z_{i+j}(I))\\
&\leq \reg(Z_i(I))+\reg(Z_j(I))\\
&= \max_{0< \alpha< n}\lbrace \reg(H_{i-\alpha}(I))+\alpha+1\rbrace +\max_{0<\beta < n} \lbrace \reg(H_{j-\beta}(I))+\beta+1\rbrace\\
&= \max_{0<\alpha,\beta<n}\lbrace \reg(H_{i-\alpha}(I))+\reg(H_{j-\beta}(I))+\alpha+\beta +2\rbrace.
\end{align*}
\end{proof}
The following example shows the deviation degree of our bound comparing  to the bound provided by A. Conca and S. Murai in dimension $0$. 
\begin{exmp} 
Let $S=k[x,y,z]$ be a polynomial ring and $I = (x,y,z)^4$. We compare our bound for the regularity of $H_{12}(I)$ for different $i,j$ by the bound in the \cite{CM}. By using MACAULAY2 \cite{M2}  one can see that the $\reg(H_{12}(I))=57$. For bounding regularity of  $H_{12}(I)$ we should choose $i,j$ such that $i+j=13$. By choosing $(i,j)=(1,12)$ (respectively $(2,11), (3,10), (4,9), (5,8), (6,7)$) the right hand side of \ref{equation} is $57$ (respectively $58, 58, 59, 59, 58$). 
On the other hand in the bound proposed by A. Conca and S. Murai the best possible estimate is $61$.
\end{exmp}

\begin{cor}
Let $S=k[x_1,\dots , x_n]$ be a polynomial ideal and $I$ be an ideal of $S$. If $\dim S/I\leq 1$, then
\begin{center}
$\reg(H_c(I))\leq (c+1)\reg(H_0(I))+2c $.
\end{center}
\end{cor}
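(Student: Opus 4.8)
The plan is to derive the corollary from Theorem \ref{convexityhomology} by iteration on the homological degree $c$. The target inequality $\reg(H_c(I))\leq (c+1)\reg(H_0(I))+2c$ is linear in $c$ with slope $\reg(H_0(I))+2$, so I would set $r:=\reg(H_0(I))$ and try to prove $\reg(H_c(I))\leq (c+1)r+2c$ by induction on $c$, peeling off one homological index at a time. The natural way to peel off a single index is to apply Theorem \ref{convexityhomology} with $i=c$ and $j=1$, which gives $\reg(H_c(I))=\reg(H_{(c)+(1)-1}(I))\leq \max_{0<\alpha,\beta<n}\{\reg(H_{c-\alpha}(I))+\reg(H_{1-\beta}(I))+\alpha+\beta\}$. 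Since $0<\beta<n$ and the homology indices must be nonnegative, the only admissible choice is $\beta=1$, forcing $\reg(H_{1-\beta}(I))=\reg(H_0(I))=r$, and the right side becomes $\max_{0<\alpha<n}\{\reg(H_{c-\alpha}(I))+\alpha\}+r+1$.

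The key step is then to feed the inductive hypothesis $\reg(H_{c-\alpha}(I))\leq (c-\alpha+1)r+2(c-\alpha)$ into this expression. First I would substitute to bound each summand by $(c-\alpha+1)r+2(c-\alpha)+\alpha+r+1=(c-\alpha+2)r+2c-\alpha+1$. The coefficient of $\alpha$ inside this is $-r-1<0$ (using $r=\reg(H_0(I))\geq 0$), so the maximum over $0<\alpha<n$ is attained at the smallest admissible value $\alpha=1$. Plugging in $\alpha=1$ yields $(c+1)r+2c$, which is exactly the claimed bound, completing the induction. The base case $c=0$ reads $\reg(H_0(I))\leq r$, which holds by definition of $r$.

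The main obstacle I anticipate is handling the index-range bookkeeping cleanly, specifically making sure the indices $c-\alpha$ stay in the range where the inductive hypothesis applies and that the degenerate cases are treated honestly. In Theorem \ref{convexityhomology} the maximum runs over $0<\alpha<n$, but $H_{c-\alpha}(I)$ can vanish (for instance when $c-\alpha<0$ or when $c-\alpha$ exceeds $\mu(I)$), in which case $\reg(H_{c-\alpha}(I))$ is either $-\infty$ or undefined; I would state the convention that such terms do not increase the maximum, so restricting attention to the genuinely present summands only strengthens the inequality and the argument that $\alpha=1$ is optimal remains valid. A second subtlety is that Theorem \ref{convexityhomology} is stated for $i,j\geq 1$, so the choice $(i,j)=(c,1)$ is legitimate precisely when $c\geq 1$, which matches the inductive step; the case $c=0$ is covered separately by the base case. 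Once these conventions are fixed, the monotonicity in $\alpha$ makes the estimate essentially mechanical.
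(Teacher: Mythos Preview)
Your proposal is correct and follows essentially the same induction as the paper: both apply Theorem \ref{convexityhomology} with one of the two indices equal to $1$ (you take $(i,j)=(c,1)$, the paper takes $(i,j)=(1,r+1)$, which is symmetric), reduce to the single surviving term $\reg(H_0(I))$ on one side, and then feed in the inductive hypothesis on the other. The only cosmetic differences are that the paper starts the induction at $c=1$ rather than $c=0$, and bounds each summand directly by the target rather than first noting that the maximum occurs at $\alpha=1$.
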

 \begin{proof}
We prove by induction. For $c=1$ by \ref{convexityhomology} we have
$$
\reg(H_1(I))\leq \lbrace \reg(H_0(I))+1+\reg(H_0(I))+1\rbrace = 2\reg(H_0(I))+2 .
$$
Let $\reg(H_i(I))\leq (i+1)\reg(H_0(I))+2i $ for all $i\leq r$, by choosing $i=1$ and $j=r+1$ in \ref{equation} we have
$$
\reg(H_{r+1}(I))\leq \max_{0<\beta<n} \lbrace \reg(H_0(I))+\reg(H_{r+1-\beta}(I))+\beta+1\rbrace
$$
For  all $0< \beta<n$ we have
\begin{align*}
reg(H_{r+1-\beta}(I))+\beta+1&\leq (r-\beta+2)\reg(H_0(I))+2(r+1-\beta)+\beta+1\\
&\leq (r+1)\reg(H_0(I))+2(r+1).
\end{align*}
 
Therefore $\reg(H_{r+1}(I))\leq (r+2)\reg(H_0(I))+2(r+1).$
 \end{proof}
\section{Green-Lazarsfeld index of veronese embedding}
Let $X$ be a smooth projective variety with a very ample line bundle $L$ which sets up an embedding into projective space $\PP^r$ where $r= h^{0} (X,L) -1$. Let $S= \sym H^{0} (X,L)$ be the homogeneous coordinate ring of $\PP^r$ and if we define $R: = \oplus H^{0}(X, \mathcal{O}(kL))$, then $R$ can be viewed as a finitely generated graded $S$-module. The syzygies of $R$ as an $S$-module is investigated by M. Green. Let $X$ be a curve of genus $g$ and let $\mathcal{L}$ be a very ample line bundle on $X$  M. Green proved that if $\deg \mathcal{L} = 2g+p+1$ then the embedding defined by $\mathcal{L}$ has property $N_p$.
In the case of Veronese embedding of projective spaces $\mathcal{\varphi}_c : \PP^{n-1} \rightarrow \PP^{N}$ M. Green proved that the Veronese subring $S^{(c)} = \bigoplus_{i\in \NN}  S_{ic}$ satisfies property $N_c$ and then W. Bruns A. Conca and T. R\"{o}mer extend the lower bound to $c+1$.

G. Ottaviani and R. Paolletti  \cite{OP} proved that the Veronese embedding $\mathcal{\varphi}$ for $n\geqslant 2$ and $c\geqslant 3$ does not satisfy property $N_{3c-2}$. In zero characteristic therefore one can deduce that $$c+1 \leqslant \inde (S^{(c)}) \leqslant 3c-3.$$
G. Ottaviani and R. Paolletti showed that if $n=3$ then $\inde (S^{(c)}) =3c-3$ and they conjectured  that the equality holds for arbitrary $n\geq 3$. Recently, T. Vu proved the conjecture of G. Ottaviani and R. Paoletti in the case $c=4$ \cite{V}.
The following theorem provides an equivalence between the study of the syzygies of Veronese embedding and the study of Koszul homologies of powers of the maximal ideal.
\begin{thm}\label{bettiveronese}
Let $S=k[x_1,\dots ,x_n]$. For $i\in \mathbb{N}$ and $j\in \mathbb{Z}$ we have:
\begin{equation}
\beta_{i,j}(V_S(c))= \dim_K H_i(\mathfrak{m}^c, R)_{jc}
\end{equation}
\end{thm}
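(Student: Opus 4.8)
The plan is to establish the isomorphism $\beta_{i,j}(V_S(c)) = \dim_K H_i(\mathfrak{m}^c, R)_{jc}$ by relating the Betti numbers of the Veronese ring (computed as $\Tor$ over the polynomial ring $k[t_1,\dots,t_N]$) to Koszul homology of $\mathfrak{m}^c$ acting on the module $R$. Here $R = S^{(c)} = \bigoplus_{i} S_{ic}$ viewed as a module, and the key observation is that the minimal generators of $\mathfrak{m}^c$ are exactly the $N = \binom{n+c-1}{n-1}$ monomials of degree $c$, which are precisely the images of the variables $t_1,\dots,t_N$ under the defining surjection $\phi$. First I would recall the standard fact that for a finitely generated graded module over $k[t_1,\dots,t_N]$, the graded Betti numbers are computed by the Koszul (Tor) formula $\beta_{i,j}(R) = \dim_K \Tor_i^{k[t]}(R,k)_j$, and that this $\Tor$ can be computed via the Koszul complex on $t_1,\dots,t_N$ tensored with $R$.

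The central step is to translate this Koszul complex on the $t_i$ over $k[t_1,\dots,t_N]$ into the Koszul complex on the degree-$c$ monomials $f_1,\dots,f_N$ (the minimal generators of $\mathfrak{m}^c$) acting on $R$. Since $t_i$ acts on $R$ exactly as multiplication by $f_i = \phi(t_i)$, the Koszul complex $K(t_1,\dots,t_N) \otimes_{k[t]} R$ is identified, term by term and with matching differentials, with the Koszul complex $K(\mathfrak{m}^c, R)$ defined in the preliminaries as $K(f_1,\dots,f_N) \otimes R$. Therefore $\Tor_i^{k[t]}(R,k) \cong H_i(\mathfrak{m}^c, R)$ as graded vector spaces, once the gradings are correctly matched. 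The grading bookkeeping is where care is needed: a basis element of the Koszul complex in homological degree $i$ carries internal degree $i$ in the $t$-variables (each $t$ has degree $1$ in $k[t_1,\dots,t_N]$), but each $t_i$ maps to a degree-$c$ element of $S$, so an element of $\Tor$-degree $j$ over $k[t]$ corresponds to an element of $S$-degree $jc$. This explains the factor of $c$ in the subscript $jc$ on the right-hand side, and matching it precisely is the main technical point.

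The step I expect to be the main obstacle is verifying that the identification of the two Koszul complexes is compatible with the grading in a way that yields exactly the degree shift $j \mapsto jc$, rather than an off-by-something shift coming from the degree shifts $S(-d_i)$ built into the definition of $K(\mathfrak{m}^c,S)$ in the preliminaries (where each generator contributes degree $d_i = c$). One must check that the internal $\ZZ$-grading on $\bigwedge^i$ of the free module, together with the grading on $R$, reproduces on the left-hand side the grading used to define $\beta_{i,j}$ and on the right-hand side the $jc$-graded piece of $H_i(\mathfrak{m}^c,R)$. Concretely, I would fix the convention that the $i$-th exterior power basis element sits in degree $ic$ on the Koszul side, so that an element of total internal degree $jc$ in $H_i(\mathfrak{m}^c,R)$ corresponds to $\Tor$-degree $j$ over $k[t]$, and then confirm that the differentials respect these gradings. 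Once this correspondence is pinned down, the theorem follows immediately from the Koszul resolution computation of $\Tor$, and no deeper homological input is required.
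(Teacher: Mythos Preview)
The paper does not give its own proof of this statement; it simply refers the reader to \cite[4.1]{BCR}. Your sketch is essentially the argument one finds there: compute $\beta_{i,j}(V_S(c))=\dim_k\Tor_i^{k[t_1,\dots,t_N]}(V_S(c),k)_j$ via the Koszul complex on $t_1,\dots,t_N$, identify that complex with $K(\mathfrak{m}^c,-)$ through the surjection $\phi$ sending $t_l$ to the $l$-th degree-$c$ monomial, and keep track of the grading dilation $j\mapsto jc$ coming from $\deg t_l=1$ versus $\deg\phi(t_l)=c$. Your discussion of the potential off-by-one issue with the shifts $S(-d_l)$ is the right thing to check, and your resolution (each wedge basis element sits in degree $ic$ on the $S$-side) is correct.

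One small remark: you take $R=S^{(c)}$, while in the applications immediately following (Lemma~\ref{increasehomology} and Proposition~\ref{ri's}) the paper writes $H_i(\mathfrak{m}^c)$, which by its conventions means $H_i(\mathfrak{m}^c,S)$. This discrepancy is harmless: since every free summand of $K_i(\mathfrak{m}^c,S)$ is of the form $S(-ic)$, the degree-$jc$ piece of $K_\bullet(\mathfrak{m}^c,S)$ involves only $S_{(j-i)c}$, so $K_\bullet(\mathfrak{m}^c,S)_{jc}=K_\bullet(\mathfrak{m}^c,S^{(c)})_{jc}$ and the two homologies coincide in degrees divisible by $c$.
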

\begin{proof}
See 4.1 in \cite{BCR} for the proof.
\end{proof}
Now we use our results on regularity of Koszul cycles and Koszul homologies to find a lower bound for the Green-Lazarsfeld index of Veronese embedding. In this regard, we are able to  reproof the statement of M. Green in \cite{G}.
\begin{lem} \label{increasehomology}
Let $S=k[x_1,\dots,x_n]$ and $V_s(c)$ be the $c$-th Veronese subring of $S$, then
$$
\reg(Z_{i+2}(\mathfrak{m}^c))= \reg(H_{i+1}(\mathfrak{m}^c))+2, \,\, \text{for}\,\,  i\leq \min\lbrace \inde(V_S(c)), 2c\rbrace
$$
\end{lem}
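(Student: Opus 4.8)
The plan is to reduce the whole statement to Theorem \ref{reginduc} together with a degree bound on the Koszul homologies of $\mathfrak{m}^c$. Since $\dim S/\mathfrak{m}^c = 0 \le 1$, the remark following Theorem \ref{reginduc} applies to $I=\mathfrak{m}^c$ at homological degree $i+2$ and gives
\[ \reg(Z_{i+2}(\mathfrak{m}^c)) = \max_{j>0}\{\reg(H_{i+2-j}(\mathfrak{m}^c)) + j + 1\}. \]
The term $j=1$ equals $\reg(H_{i+1}(\mathfrak{m}^c)) + 2$, so the inequality ``$\ge$'' is free, and the entire content of the lemma is that this term attains the maximum.

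To see that the $j=1$ term dominates, I would write each $j\ge 2$ term with $k=j-1\ge 1$ and $b=i+1-k$, so that the required inequality becomes $\reg(H_{b}(\mathfrak{m}^c)) + k \le \reg(H_{i+1}(\mathfrak{m}^c))$. I plan to establish this from two estimates. First, a lower bound $\reg(H_{i+1}(\mathfrak{m}^c)) \ge (i+1)c + 1$: by Proposition \ref{zi in mKi} we have $Z_{i+1}(\mathfrak{m}^c)\subseteq \mathfrak{m}K_{i+1}(\mathfrak{m}^c)$, so $H_{i+1}(\mathfrak{m}^c)$ is supported in degrees $>(i+1)c$; being of finite length its regularity equals its top degree, and it is nonzero in this range because the linear strand $\beta_{i+1,i+2}(V_S(c))$ does not vanish. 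Second, an upper bound $\reg(H_b(\mathfrak{m}^c)) \le (b+1)c$ for $0\le b\le i$. Granting these, $\reg(H_b(\mathfrak{m}^c)) + k \le (b+1)c + k = (i+2-k)c + k$, and $(i+2-k)c + k \le (i+1)c + 1$ since this rearranges to $(1-k)(c-1)\le 0$, which holds for all $k\ge 1$. Hence the maximum is attained at $j=1$ and the equality follows.

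The main obstacle is the upper bound $\reg(H_b(\mathfrak{m}^c))\le (b+1)c$, and this is exactly where both hypotheses $b\le i\le \inde(V_S(c))$ and $i\le 2c$ enter. I would argue degree by degree according to the residue modulo $c$. For degrees divisible by $c$, Theorem \ref{bettiveronese} gives $\dim_k H_b(\mathfrak{m}^c)_{\ell c}=\beta_{b,\ell}(V_S(c))$; since $b\le \inde(V_S(c))$, property $N_b$ forces $\beta_{b,\ell}(V_S(c))=0$ for $\ell\ge b+2$, so there are no classes in multiple-of-$c$ degrees above $(b+1)c$. For the degrees not divisible by $c$ one must pass to the finer $\ZZ^n$-grading: each multigraded piece $H_b(\mathfrak{m}^c)_{\mathbf a}$ is computed, as in \cite{BCR}, by a simplicial complex on the degree-$c$ monomials dividing $x^{\mathbf a}$ (equivalently, these residue strands are the Koszul homologies of $\mathfrak{m}^c$ with values in the Veronese modules $\bigoplus_j S_{s+jc}$, $0<s<c$). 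The role of the bound $i\le 2c$ is to keep these strands concentrated in total degree strictly below $(b+1)c$, so that they never overtake the top multiple-of-$c$ contribution.

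Putting the pieces together, the two estimates of the second paragraph show that every $j\ge 2$ term is at most $\reg(H_{i+1}(\mathfrak{m}^c))+2$, which yields the asserted equality. I expect the genuine difficulty to lie entirely in the uniform degree bound $\reg(H_b(\mathfrak{m}^c))\le (b+1)c$ for the residue strands $s\neq 0$: the divisible-by-$c$ part is immediate from $N_p$ and Theorem \ref{bettiveronese}, but controlling the remaining strands requires the combinatorial description of the multigraded Koszul homology of $\mathfrak{m}^c$ together with the linearity of the Betti numbers of the Veronese modules, and it is here that the threshold $2c$ must be used.
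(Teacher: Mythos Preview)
Your reduction via Theorem \ref{reginduc} to showing that the $j=1$ term dominates the maximum is exactly the paper's opening move. The divergence is in how you obtain the upper bound on $\reg(H_b(\mathfrak{m}^c))$ for $b\le i$, and this is where your proposal has a genuine gap.

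You aim for the strong bound $\reg(H_b(\mathfrak{m}^c))\le (b+1)c$ and propose to prove it by a direct multigraded analysis: multiples of $c$ via $N_p$ and Theorem \ref{bettiveronese}, the remaining residues via the simplicial description of $H_\bullet(\mathfrak{m}^c)$ from \cite{BCR}. You correctly flag this last step as the crux, but you do not carry it out; the sentence ``the role of the bound $i\le 2c$ is to keep these strands concentrated in total degree strictly below $(b+1)c$'' is an assertion, not an argument, and nothing in your outline explains why $2c$ is the right threshold.

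The paper bypasses this entirely by invoking Theorem \ref{convex}. Subadditivity gives
\[
\reg(Z_b(\mathfrak{m}^c))\le b\cdot\reg(Z_1(\mathfrak{m}^c))=b\bigl(\reg(H_0(\mathfrak{m}^c))+2\bigr)=b(c+1),
\]
and the hypothesis $b\le i\le 2c$ is precisely the inequality $b(c+1)\le (b+2)c$. This bounds the degrees of the minimal generators of $H_b$ by $(b+2)c$; combined with $H_b(\mathfrak{m}^c)_{(b+2)c}=0$ (from $N_p$, since $b\le\inde(V_S(c))$) the paper concludes $\reg(H_b)<(b+2)c$. On the other side it uses the sharper lower bound $\reg(H_{b+1})\ge (b+2)c$ coming from $\beta_{b+1,b+2}(V_S(c))\ne 0$ (you cite the same nonvanishing but extract only $\ge (i+1)c+1$ from it). Together these give $\reg(H_b)<\reg(H_{b+1})$ for every $b\le i$, so the sequence is strictly increasing and the maximum in Theorem \ref{reginduc} is attained at $j=1$.

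In short, the missing idea in your approach is Theorem \ref{convex}: the convexity of $\reg(Z_\bullet)$ is exactly what replaces your proposed combinatorial detour, makes the condition $i\le 2c$ transparent, and keeps the proof internal to the tools already developed in the paper.
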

\begin{proof}
By Theorem \ref{reginduc}, it suffices to prove that the regularity of Koszul homologies are increasing as a function of $i$. Since $i\leq \inde(V_S(c))$, by Theorem \ref{bettiveronese}
$$H_i(\mathfrak{m}^c)_{(i+2)c}=0\,\, \text{and}\,\, \reg(H_{i+1}(\mathfrak{m}^c))\geq (i+2)c.$$
As $H_i(\mathfrak{m}^c)_{(i+2)c}=0$, $\reg(H_i(\mathfrak{m}^c))< (i+2)c$ if and only if $H_i(\mathfrak{m}^c)$ has no generator of degree greater than $(i+2)c$. Hence it suffices to show that $\reg(Z_i(\mathfrak{m}^c))\leq (i+2)c$. 
By \ref{convex} and \ref{reginduc} we have
$$
\reg( Z_i(\mathfrak{m}^c))\leq i \reg(Z_1(\mathfrak{m}^c))= i(\reg(H_0(\mathfrak{m}^c))+2)= i(c+1)
$$
Since $i\leq 2c$, then $\reg(Z_i(\mathfrak{m}^c))\leq (i+2)c$. 
\end{proof}

As a consequence of Theorem  \ref{bettiveronese} we know that $\reg(H_i(\mathfrak{m}^c))=(i+1)c+r_i$. From the definition of Green-Lazarsfeld index one can see $i\leq \inde(V_R(c))$ if and only if $r_i \leq c-1$. In order to find a bound for the index of Veronese embedding we can study the behavior of $r_i$'s. Notice that $r_i\in \mathbb{Z}$ for instance $r_0=-1$.

\begin{prop}\label{ri's}
With the above notations we have $r_{i+1}\leq r_{i} +1$  for all $i\leq \min \lbrace \inde (V_S(c))+1, 2c+1 \rbrace$.
\end{prop}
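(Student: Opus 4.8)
The plan is to translate the statement about the integers $r_i$ into a statement about the Castelnuovo--Mumford regularity of the Koszul homologies and then route it through the regularity of the Koszul cycles. By the defining relation $\reg(H_i(\mathfrak{m}^c)) = (i+1)c + r_i$, the desired inequality $r_{i+1} \leq r_i + 1$ is equivalent to
$$
\reg(H_{i+1}(\mathfrak{m}^c)) \leq \reg(H_i(\mathfrak{m}^c)) + c + 1,
$$
so it is this form that I would establish. Before starting I would record the two base computations $\reg(H_0(\mathfrak{m}^c)) = \reg(S/\mathfrak{m}^c) = c-1$ and, as a consequence, $\reg(Z_1(\mathfrak{m}^c)) = \reg(H_0(\mathfrak{m}^c)) + 2 = c+1$, exactly as in the proof of Lemma \ref{increasehomology}.

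The heart of the argument is the chain
$$
\reg(H_{i+1}(\mathfrak{m}^c)) + 2 \;\leq\; \reg(Z_{i+2}(\mathfrak{m}^c)) \;\leq\; \reg(Z_{i+1}(\mathfrak{m}^c)) + \reg(Z_1(\mathfrak{m}^c)) \;=\; \bigl(\reg(H_i(\mathfrak{m}^c)) + 2\bigr) + (c+1).
$$
The first inequality is just the $j=1$ term of the maximum in Theorem \ref{reginduc}, hence an \emph{unconditional} lower bound for $\reg(Z_{i+2}(\mathfrak{m}^c))$ that costs nothing in the homological range. The middle inequality is the convexity of Theorem \ref{convex} applied with $s=i+1$, $t=1$ (legitimate since $\dim S/\mathfrak{m}^c = 0$ and the characteristic is $0$). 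The final equality splits into $\reg(Z_1(\mathfrak{m}^c)) = c+1$ from the base computation and the equality $\reg(Z_{i+1}(\mathfrak{m}^c)) = \reg(H_i(\mathfrak{m}^c)) + 2$, which is precisely Lemma \ref{increasehomology} after the index shift $i \mapsto i-1$. Cancelling the $2$ on both sides yields the target inequality, and re-substituting $\reg(H_i(\mathfrak{m}^c)) = (i+1)c + r_i$ returns $r_{i+1} \leq r_i + 1$.

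The step I expect to require the most care is the bookkeeping of the homological range, since this is exactly what pins down the bound $i \leq \min\{\inde(V_S(c))+1,\, 2c+1\}$ in the statement. The only place the argument invokes the delicate equality of Lemma \ref{increasehomology} is in rewriting $\reg(Z_{i+1}(\mathfrak{m}^c))$ as $\reg(H_i(\mathfrak{m}^c)) + 2$; after the shift $i \mapsto i-1$ that equality is available precisely for $i \leq \min\{\inde(V_S(c))+1,\, 2c+1\}$. It is essential \emph{not} to also replace $\reg(Z_{i+2}(\mathfrak{m}^c))$ by the analogous equality $\reg(H_{i+1}(\mathfrak{m}^c)) + 2$: doing so would demand the Lemma one homological step further out and would shrink the admissible range to $\min\{\inde(V_S(c)),\,2c\}$. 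Relying at that spot only on the free one-sided bound coming from Theorem \ref{reginduc} is what buys the extra $+1$ in the range and makes the conclusion match the stated hypotheses exactly.
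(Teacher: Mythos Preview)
Your argument is correct and follows the same route as the paper: bound $\reg(H_{i+1})+2$ by $\reg(Z_{i+2})$, apply the convexity Theorem~\ref{convex} to split off $\reg(Z_1)=c+1$, and use Lemma~\ref{increasehomology} to rewrite $\reg(Z_{i+1})$ as $\reg(H_i)+2$. Your bookkeeping is in fact tighter than the paper's: the paper invokes Lemma~\ref{increasehomology} as an \emph{equality} at both ends of the chain, which strictly speaking only covers $i\le\min\{\inde(V_S(c)),2c\}$; by using merely the inequality $\reg(H_{i+1})+2\le\reg(Z_{i+2})$ from Theorem~\ref{reginduc} on the left and reserving the Lemma for the right-hand side, you recover exactly the stated range $i\le\min\{\inde(V_S(c))+1,2c+1\}$.
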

\begin{proof}
By Theorem \ref{convex} we have a triangle inequality between the regularity of Koszul cycles, in particular for $Z_1(\mathfrak{m}^c)$ and $Z_i(\mathfrak{m}^c)$. By using  Corollary \ref{increasehomology} and Theorem \ref{reginduc}
\begin{align*}
\reg (H_{i+1}(\mathfrak{m}^c))+2 &=  \reg(Z_{i+2}(\mathfrak{m}^c))\\
&\leq \reg(Z_{i+1}(\mathfrak{m}^c))+ \reg(Z_1(\mathfrak{m}^c))\\
 &= \reg(H_{i}(\mathfrak{m}^c))+2+ \reg(H_0(\mathfrak{m}^c))+2
\end{align*}

by the above notation we have that $(i+2)c+r_{i+1}\leq (i+1)c+r_{i}+ c+1$. In particular $r_{i+1}\leq r_{i-1}+1$.
\end{proof}

\begin{cor}
The Green-Lazarsfeld index of Veronese embedding $V_S(c)$ is at least $c$.
\end{cor}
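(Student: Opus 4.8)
The plan is to argue by contradiction, exploiting the numerical recursion on the defect quantities $r_i$ established in Proposition \ref{ri's} together with the base value $r_0 = -1$. Recall that by Theorem \ref{bettiveronese} we may write $\reg(H_i(\mathfrak{m}^c)) = (i+1)c + r_i$, and that the reformulation of the $N_p$ condition recorded just before Proposition \ref{ri's} says that $i \leq \inde(V_S(c))$ holds precisely when $r_i \leq c - 1$. Hence proving $\inde(V_S(c)) \geq c$ is equivalent to showing $r_i \leq c - 1$ for every $i \leq c$.

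First I would suppose toward a contradiction that $\inde(V_S(c)) < c$; in particular the index is finite, and writing $p := \inde(V_S(c))$ we have $p \leq c - 1$. By the defining property of the index, $r_i \leq c - 1$ for all $i \leq p$, while the failure of $N_{p+1}$ forces $r_{p+1} \geq c$.

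Next I would verify that the interval on which Proposition \ref{ri's} applies actually reaches the critical index $p+1$. Since $p \leq c - 1$, we have $p + 1 \leq c \leq 2c + 1$, so $\min\{\inde(V_S(c)) + 1,\, 2c+1\} = p + 1$, and therefore the inequality $r_{i+1} \leq r_i + 1$ is valid for every $i$ with $0 \leq i \leq p$. Telescoping these down to the base value yields
$$
r_{p+1} \leq r_p + 1 \leq \cdots \leq r_0 + (p+1) = -1 + (p+1) = p \leq c - 1,
$$
which contradicts $r_{p+1} \geq c$. Thus no such $p$ exists and $\inde(V_S(c)) \geq c$.

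The only delicate point, and really the only place where the argument could break, is the bootstrapping built into Proposition \ref{ri's}: its recursion is guaranteed only on the range $i \leq \min\{\inde(V_S(c)) + 1,\, 2c+1\}$, whose length itself depends on the very quantity $\inde(V_S(c))$ we are trying to bound. The contradiction hypothesis $p < c$ is exactly what renders this circularity harmless, since it forces the cutoff $\min\{p+1,\, 2c+1\}$ to equal $p+1$, keeping the entire telescoping chain from $r_{p+1}$ back to $r_0$ inside the permitted range. Once that containment is secured the remaining estimate is purely numerical.
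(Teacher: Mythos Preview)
Your proof is correct and follows the same strategy as the paper: telescope the increment bound $r_{i+1}\le r_i+1$ from Proposition \ref{ri's} down to $r_0=-1$ to force $r_i\le c-1$ in the relevant range. Your contradiction framing is in fact a bit more careful than the paper's terse direct argument, since you explicitly verify that the hypothesis of Proposition \ref{ri's} (whose range depends on $\inde(V_S(c))$ itself) covers the entire telescoping chain; the paper simply asserts $r_c\le c-1$ without spelling out this bootstrapping.
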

\begin{proof}
As we mention above, for finding the Green-Lazarsfeld index of Veronese embedding we should control $r_i$'s.  Proposition \ref{ri's}  shows that in each step, $r_i$'s can be increased only be one. Since $r_0=-1$ so $r_c\leq c-1$ that means $\inde(V_S(c))\geq c$.
\end{proof}


\section*{Acknowledgment}
A part of this work was done during a research visit of first author at Dipartimento di Matematica, Universita di Genova, DIMA. First author would like to thank Aldo Conca for very useful discussions and hospitality of DIMA during his visit. We would like to thank Marc Chardin who provided insight that greatly assisted the research.


Institut de Math\'{e}matiques de Jussieu, UPMC, 75005 Paris, France.\\
\\
kamranlamei79@gmail.com \qquad navid.nemati@imj-prg.fr
\end{document}